\theoremstyle{plain}
\newtheorem{theorem}{Theorem}[section]
\newtheorem{lemma}[theorem]{Lemma}
\newtheorem{proposition}[theorem]{Proposition}
\theoremstyle{definition}
\newcommand{\Out}{\mathrm{Out}}
\renewcommand{\S}{\mathcal{S}}
\newcommand{\FF}{\mathcal{F}}
\newcommand{\C}{\mathcal{C}}
\newcommand{\X}{\mathcal{X}}
\begin{document}

\title{A note on subfactor projections}
\author[S. Taylor]{Samuel J. Taylor}
\address{Department of Mathematics, 
University of Texas at Austin, 
1 University Station C1200, 
Austin, TX 78712, U.S.A.
}
\email{\href{mailto:staylor@math.utexas.edu}{staylor@math.utexas.edu}}
\date{June 2013}
\thanks{The author is partially supported by NSF RTG grants DMS-0636557 and DMS-1148490}

\begin{abstract}
We extend some results of \cite{BFproj} on subfactor projections to show that the projection of a free factor  $B$ to the free factor complex of the free factor $A$ is well-defined with uniformly bound diameter, unless either $A$ is contained in $B$ or $A$ and $B$ are vertex stabilizers of a single splitting of $F_n$, i.e. they are disjoint. These projections are shown to satisfy properties analogous to subsurface projections, and we give as an application a construction of fully irreducible outer automorphisms using the Bounded Geodesic Image Theorem.
\end{abstract}
\maketitle

\section{Introduction} \label{intro}
In their recent work on the geometry of $\Out(F_n)$, Mladen Bestvina and Mark Feighn define the projection of a free factor $B < F_n$ to the free splitting complex (or free factor complex) of the free factor $A$, when the two factors are in ``general position.'' They show that these subfactor projections have properties that are analogous to subsurface projections used to study mapping class groups, and they use their results to show that $\Out(F_n)$ acts on a product of hyperbolic spaces in such a way that exponentially growing automophisms have positive translation length.

Because the authors were primarily interested in projections to the splitting complex of a free factor, relatively strong conditions were necessary in order to guarantee that the projections have uniformly bounded diameter, i.e. that they are well-defined. They show that one may project $B$ to the splitting complex of $A$ if either $A$ and $B$ have distance at least $5$ in the free factor complex of $F_n$ or if they have the same color in a specific finite coloring of the factor complex. In this note, we show that if one considers projections to the \emph{free factor complex of a free factor}, simpler and more natural conditions can be given. In particular, we show that for free factors $A,B < F_n$ with rank$(A) \ge 2$ the projection $\pi_A(B) \subset \FF(A)$ into the free factor complex of $A$ is well-defined so long as $(1)$ $A$ is not contained in $B$, up to conjugation, and $(2)$ $A$ and $B$ are not disjoint. This exactly mimics the case for subsurface projection. Here, free factors $A$ and $B$ are \emph{disjoint} if they are distinct vertex stabilizers of a splitting of $F_n$, or equivalently, if they can be represented by disjoint subgraphs of a marked graph $G$. These are also the obvious necessary condition for the projection to be defined. As a consequence of this more inclusive projection, we are able to merge the Bestvina-Feighn projections with those considered in \cite{Tayl1}. 

The first part of this note should be considered as a direct follow-up to the work of Bestvina and Feighn, as our arguments rely heavily on the techniques developed in \cite{BFproj}. Our contribution toward defining subfactor projections is an extension of their results. In summary, we show:

\begin{theorem} \label{main}
There is a constant $D$ depending only on $n = \mathrm{rank}(F_n)$ so that if $A$ and $B$ are free factors of $F_n$ with $\mathrm{rank}(A) \ge 2$, then either 
\begin{enumerate}
\item $A \subset B$, up to conjugation, 
\item $A$ and $B$ are disjoint, or 
\item $\pi_A(B) \subset \FF(A)$ is defined and has diameter $\le D$.
\end{enumerate}
 Moreover, these projections are equivariant with respect to the action of $\Out(F_n)$ on conjugacy classes of free factors and they satisfy the following: There is an $M\ge 0$ so that if free factors $A,B < F_n$ overlap and $G$ is a marked $F_n$-graph, then
$$\min\{ d_A(B,G), d_B(A,G) \} \le M. $$
\end{theorem}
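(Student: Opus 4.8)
The plan is to establish the inequality in contrapositive form: assuming $d_A(B,G)$ is large, I will bound $d_B(A,G)$ by a constant depending only on $n$. The equivariance statement I would dispatch first, as it is immediate from the construction: $\pi_A(B)$ is extracted from the core of the $A$-cover of a marked graph carrying $B$, and any $\phi \in \Out(F_n)$ carries this data for $(A,B,G)$ to the corresponding data for $(\phi A, \phi B, \phi G)$, so the projections commute with the action on conjugacy classes of free factors. For the inequality I may freely use part (3), that $\pi_A(B)$ and $\pi_A(G)$ are coarsely well defined once $A$ and $B$ overlap, together with the hyperbolicity of each factor complex $\FF(A)$.

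The substantive plan is the direct, Behrstock-style argument. Since $A$ and $B$ overlap, the intersections $A \cap B^{g}$ (as $g$ ranges over the relevant double-coset representatives) cut out a nontrivial proper free factor system of $A$, which I denote $\partial_A B$, and symmetrically one obtains $\partial_B A$ inside $B$; the projection $\pi_A(B)$ is coarsely the image of $\partial_A B$ in $\FF(A)$, and likewise for $\pi_B(A)$. First I would realize $A$ and $B$ as core subgraphs of a single marked graph and record that $\pi_A(G)$ and $\pi_B(G)$ are computed from the cores of the $A$- and $B$-covers of $G$. The mechanism I am after is the statement that $G$ cannot be \emph{deep} in $\FF(A)$ relative to $\partial_A B$ and simultaneously deep in $\FF(B)$ relative to $\partial_B A$: once the $A$-cover core of $G$ fills $A$ away from $\partial_A B$, the only part of $G$ visible through the $B$-cover is the part it shares with $A$, so that the $B$-cover core of $G$ simplifies onto $\partial_B A$ and hence $\pi_B(G)$ lies a bounded distance from $\pi_B(A)$.

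The hard part will be making this last dichotomy precise, since it is the free-group analogue of Behrstock's original subsurface inequality and is exactly where the overlap hypothesis is indispensable: if $A$ were contained in $B$ or disjoint from it, the factor systems $\partial_A B$ and $\partial_B A$ would degenerate and the argument would collapse. To carry it out I would lean on the Bestvina--Feighn analysis of how subfactor cover cores interact under folding, controlling the variation of $\pi_A$ and $\pi_B$ along a folding path between $G$ and the graphs carrying $A$ and $B$; the Bounded Geodesic Image Theorem enters here to confine the relevant segment of that folding path to a bounded neighbourhood of $A$ in $\FF(F_n)$, which is what licenses replacing $G$ by an $A$-dominated graph when computing $\pi_B(G)$. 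Granting this comparison, $d_B(A,G)$ is bounded by some $M = M(n)$, which is the desired conclusion.
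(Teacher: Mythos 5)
Your proposal has two serious problems, and the first is fatal to it as a proof of this theorem. You write that you ``may freely use part (3), that $\pi_A(B)$ and $\pi_A(G)$ are coarsely well defined once $A$ and $B$ overlap.'' But part (3) --- the existence of a uniform constant $D$ bounding $\mathrm{diam}(\pi_A(B))$ --- is the main content of the statement and of the paper; it is not available as an input. The paper proves it (Theorem \ref{well}) by running a folding path from a marked rose in which $B$ is embedded to an arbitrary marked graph in which $B$ is embedded, restricting to the middle interval of Lemma \ref{lemma3.1}, and showing a dichotomy: either the subgraph $\tilde{\Omega}_a \cup E^B_a$ of $A|G_a$ contains an embedded loop, in which case Lemma \ref{progressbound} caps the progress of $\pi_A(G_t)$ in $\FF(A)$, or it is a forest, in which case Lemma \ref{jointlyembedded} produces a marked graph where $A$ and $B$ are disjointly embedded, contradicting overlap. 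Nothing in your outline substitutes for this argument.

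The second problem is the mechanism you propose for the Behrstock inequality. You build ``boundary'' factor systems $\partial_A B$ from the intersections $A \cap B^g$ and assert that $\pi_A(B)$ is coarsely the image of $\partial_A B$ in $\FF(A)$. This imports a feature of subsurfaces that free factors do not have: two overlapping free factors can intersect trivially in every conjugate (this is the generic situation), in which case $\partial_A B$ and $\partial_B A$ are empty while $\pi_A(B)$ and $\pi_B(A)$ are nonempty and well defined. The projection is defined not from intersections but from the minimal $A$-subtree of a splitting of $F_n$ with vertex stabilizer $B$; the paper explicitly warns that ``there is no canonical boundary curve of $A$.'' Consequently your key step --- that when $G$ is deep in $\FF(A)$ the $B$-cover core of $G$ ``simplifies onto $\partial_B A$'' --- has no content in the generic case. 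The paper's actual argument (Theorem \ref{Behrstock}) avoids intersections entirely: it folds from a graph $G_0$ with $A$ embedded to a refinement of $G$; if $d_B(A,G)$ is large then $B$ must become nearly embedded along the path (Lemma \ref{forest}), if $d_A(B,G)$ is also large then $A$ must become nearly embedded later still, and Lemma \ref{nearlyembedded} (a nearly embedded factor can be pushed to an embedded one along a path with bounded projection to $\FF(B)$) then contradicts $\mathrm{diam}(\pi_B(A)) \le D$. Your appeal to the Bounded Geodesic Image Theorem also inverts the logical order of the paper, where that theorem is a downstream consequence of the projection machinery rather than a tool for establishing it. Only your treatment of equivariance is correct as stated.
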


Here, free factors \emph{overlap} if one is not contained in the other, up to conjugation, and they are not disjoint. Hence, for overlapping free factors both subfactor projections are defined. For subsurface projections, the final property in Theorem \ref{main} is known as Behrstock's inequality \cite{Be}. We also have the following strengthening of the Bounded Geodesic Image Theorem of \cite{BFproj}. For subsurface projections, this was first shown in \cite{MM2}.

\begin{theorem}
For $n\ge 3$, there is $M \ge 0$ so that if $A$ is a free factor of $F_n$ with $\mathrm{rank}(A) \ge 2$ and $\gamma$ is a geodesic of $\FF_n$ with each vertex of $\gamma$ meeting $A$ (i.e. having well-defined projection to $\FF(A)$) then $\mathrm{diam}(\pi_A(\gamma)) \le M$.
\end{theorem}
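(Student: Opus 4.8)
The plan is to reduce to the Bounded Geodesic Image Theorem already proved by Bestvina and Feighn, whose hypothesis forces the vertices of the geodesic to lie \emph{far} from $A$ in $\FF_n$, and to absorb the remaining vertices — those that meet $A$ but sit close to it — into a single short subpath that can be controlled directly. I will use freely that $\FF_n$ is Gromov hyperbolic (Bestvina--Feighn, Handel--Mosher), that $n \ge 3$ guarantees $A$ is a proper free factor of rank $\ge 2$ so that $\FF(A)$ is itself an infinite-diameter hyperbolic factor complex and $\pi_A$ is the projection of Theorem~\ref{main}, and that $\pi_A$ enjoys the coarse-continuity (Lipschitz) property: there is a constant $C_0$ so that whenever $B, B'$ both meet $A$ and $d_{\FF_n}(B,B') \le 1$, one has $d_A(B,B') \le C_0$. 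This is the analogue of the corresponding elementary fact for subsurface projection, and I expect it to follow directly from the construction of $\pi_A$ (note in particular that when $B \subsetneq B' \subseteq A$ both projections are the factors themselves and the bound is immediate).

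Write $\gamma = B_0 B_1 \cdots B_N$ and fix a threshold $k_0$ large enough that the Bestvina--Feighn projection estimates apply to every factor at distance $> k_0$ from $A$ (e.g.\ $k_0 \ge 5$). Let $p$ be the least and $q$ the greatest index with $d_{\FF_n}(A, B_i) \le k_0$, if such indices exist. Since $\gamma$ is a geodesic, the triangle inequality gives
$$ q - p = d_{\FF_n}(B_p, B_q) \le d_{\FF_n}(B_p, A) + d_{\FF_n}(A, B_q) \le 2k_0, $$
so every vertex lying within distance $k_0$ of $A$ is contained in the subpath $B_p \cdots B_q$ of length at most $2k_0$, while every vertex of $B_0 \cdots B_{p-1}$ and of $B_{q+1} \cdots B_N$ is at distance $> k_0$ from $A$ and therefore has Bestvina--Feighn projection defined.

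On the two far subpaths (themselves geodesics, with all vertices far from $A$) the Bestvina--Feighn Bounded Geodesic Image Theorem bounds the diameter of the projection; transporting this through the coarse Lipschitz comparison from the free splitting complex of $A$ to $\FF(A)$ when necessary, there is a constant $M_1$ with $\mathrm{diam}(\pi_A(B_0 \cdots B_{p-1})) \le M_1$ and $\mathrm{diam}(\pi_A(B_{q+1} \cdots B_N)) \le M_1$. On the close subpath $B_p \cdots B_q$ every vertex meets $A$ by hypothesis and consecutive vertices are adjacent in $\FF_n$, so coarse continuity gives $\mathrm{diam}(\pi_A(B_p \cdots B_q)) \le 2k_0 C_0$. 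The junction pairs $(B_{p-1}, B_p)$ and $(B_q, B_{q+1})$ are adjacent and meet $A$, hence project $C_0$-close, so assembling the three pieces yields
$$ \mathrm{diam}(\pi_A(\gamma)) \le 2M_1 + 2C_0 + 2k_0 C_0 =: M, $$
a bound independent of $N$. The degenerate cases — $\gamma$ entirely far from $A$ (apply Bestvina--Feighn alone), or $\gamma$ entirely within distance $k_0$ of $A$ (so $N \le 2k_0$ and coarse continuity alone gives the bound) — are covered by the same estimates.

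The main obstacle I anticipate lies at the interface between the two projections. Bestvina and Feighn's theorem is naturally stated for projection to the free splitting complex of $A$ under a distance hypothesis, whereas the assertion here concerns $\pi_A$ into $\FF(A)$ under the weaker hypothesis of merely meeting $A$. Making the reduction rigorous therefore requires checking that the coarse comparison between the splitting complex of $A$ and $\FF(A)$ transports the Bestvina--Feighn bound with \emph{uniform} constants, and verifying the coarse-continuity estimate $d_A(B,B') \le C_0$ uniformly even for adjacent factors lying close to $A$. Once these two points are secured, the close/far decomposition above is routine; and should one prefer to avoid citing the Bestvina--Feighn theorem outright, the far subpaths could instead be handled by a direct contraction argument for $\pi_A$ using hyperbolicity of $\FF_n$ together with Behrstock's inequality from Theorem~\ref{main}.
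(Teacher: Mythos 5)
Your argument is correct, but it takes a genuinely different route from the paper. The paper's proof of this theorem is a one-line deferral: it asserts that the argument of \cite{BFproj} goes through verbatim once their projections are replaced by the more general ones constructed here, so verifying it requires re-opening the internals of the Bestvina--Feighn proof. You instead treat their Bounded Geodesic Image Theorem as a black box on the subpaths whose vertices lie at distance $>k_0$ from $A$ (where their hypotheses hold outright), and you exploit the geodesic condition to confine all vertices within distance $k_0$ of $A$ to a single subpath of length at most $2k_0$, which is then controlled by the coarse Lipschitz property of $\pi_A$ along edges of $\FF_n$ whose endpoints meet $A$. The two ingredients you flag as potential obstacles are in fact already secured in the paper: the comparison $\S(A)\to\FF(A)$ is the coarsely $4$-Lipschitz map of Section \ref{background}, and the adjacency estimate $d_A(B,B')\le 2D$ for adjacent factors both meeting $A$ is proved explicitly at the end of Section \ref{diameter} (via a common graph in which both nested factors embed), so your $C_0$ may be taken to be $2D$. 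What your approach buys is modularity --- no need to re-verify that the Bestvina--Feighn argument survives the weakened hypothesis --- at the mild cost of a larger and less explicit constant $M$; the paper's approach keeps the constant closer to the original one but leaves the reader to check the generalization. Your fallback suggestion (a direct contraction argument from hyperbolicity plus the Behrstock inequality of Theorem \ref{main}) would also work and is the standard Masur--Minsky route, but it is not needed given that the far subpaths are already covered by the cited theorem.
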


Finally, as an application of subfactor projections we give a construction of fully irreducible automorphism similar to Proposition $3.3$  of \cite{Ma2}, where pseudo-Anosov mapping classes are constructed. Here, free factors $A$ and $B$ \emph{fill} $F_n$ if no free factor $C$ is disjoint from both $A$ and $B$.

\begin{theorem} \label{introConstruct}
Let $A$ and $B$ be rank $\ge 2$ free factors of $F_n$ that fill and let $f,g \in \Out(F_n)$ satisfy the following:
\begin{enumerate}
\item $f(A) = A$ and $f|_A \in \Out(A)$ is fully irreducible, and
\item $g(B) =B$ and $g|_B \in \Out(B)$ is fully irreducible.
\end{enumerate}
Then there is an $N \ge 0$ so that any nontrivial automorphism in the subgroup $\langle f^N,g^N \rangle \le \Out(F_n)$ that is not conjugate to a power of $f$ or $g$ is fully irreducible. 
\end{theorem}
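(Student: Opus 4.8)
The plan is to show that any such $w$ acts loxodromically on the free factor complex $\FF_n$; since, by the work of Bestvina and Feighn, an element of $\Out(F_n)$ is fully irreducible precisely when it acts loxodromically on $\FF_n$, this suffices. First I would fix a basepoint $X \in \FF_n$ overlapping both $A$ and $B$ (so that $\pi_A$ and $\pi_B$ are both defined at $X$), and reduce to the case that $w$ is \emph{cyclically reduced and alternating}: after conjugating, write $w = f^{Na_1} g^{Nb_1} \cdots f^{Na_k} g^{Nb_k}$ with all $a_i, b_i \neq 0$ and $k \geq 1$, which is possible exactly because $w$ is not conjugate to a power of $f$ or of $g$.

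Next I would calibrate $N$. Because $f|_A \in \Out(A)$ is fully irreducible and $\mathrm{rank}(A)\ge 2$, it acts loxodromically on $\FF(A)$; combined with the equivariance of Theorem \ref{main}, which gives $\pi_A \circ f = f \circ \pi_A$ since $f(A)=A$, this produces a threshold $T$ with $d_A(Y, f^{N}Y) > T$ for every $Y$ overlapping $A$ (and symmetrically for $g$ and $B$). Taking $N$ large makes $T$ exceed the projection-diameter bound $D$, the Behrstock constant of Theorem \ref{main}, and the Bounded Geodesic Image constant. The heart of the construction is then a guide sequence of projection domains: writing $p_j$ for the length-$j$ prefix of $w$, set $D_j = p_{j-1}A$ when the $j$-th syllable is a power of $f$ and $D_j = p_{j-1}B$ when it is a power of $g$. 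By equivariance, the displacement of consecutive orbit points satisfies $d_{D_j}(p_{j-1}X, p_j X) = d_A(X, f^{Na_i}X) > T$ (respectively the $B$-version), so each block moves the orbit a large amount in its own domain.

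The main step is to promote these per-block estimates to a single large projection of the \emph{global} endpoints, i.e. to show $d_{D_j}(X, wX)$ is large for each $j$. Here the filling hypothesis enters: I would show that $A, B$ filling forces consecutive domains $D_{j}, D_{j+1}$ (translates of $A$ and $B$) to overlap rather than become nested or disjoint, so that the Behrstock inequality of Theorem \ref{main} applies to them. A ping-pong bookkeeping with this inequality — controlling how the prefix and suffix of $w$ can alter the $D_j$-projection — then shows that passing from $p_{j-1}X, p_j X$ to $X, wX$ changes the $D_j$-projection by a bounded amount, so $d_{D_j}(X, wX)$ still exceeds the Bounded Geodesic Image constant. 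Applying the contrapositive of the Bounded Geodesic Image Theorem to a geodesic $\gamma$ from $X$ to $wX$, each domain $D_j$ must be ``left'' by $\gamma$ (some vertex of $\gamma$ fails to meet $D_j$); since the $D_j$ are distinct and linearly ordered along $w$ — with their exit vertices separated by the overlap/Behrstock structure — a standard counting argument as in the mapping class group setting yields $d_{\FF_n}(X, wX) \gtrsim k$. Replacing $w$ by $w^m$ multiplies the syllable count, giving $d_{\FF_n}(X, w^m X) \gtrsim mk$ and hence positive translation length, so $w$ is loxodromic and therefore fully irreducible.

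I expect the overlapping-of-domains step together with the Behrstock bookkeeping to be the main obstacle. Verifying that filling implies the relevant translates of $A$ and $B$ overlap requires understanding how the images of a free factor under $f^{N}$ and $g^{N}$ sit relative to $A$ and $B$, and the ping-pong estimate must be uniform over all words $w$; this is the analogue of the delicate projection estimates in Proposition $3.3$ of \cite{Ma2}, and is precisely where the constants from Theorem \ref{main} and the Bounded Geodesic Image Theorem must be balanced against the threshold $T$.
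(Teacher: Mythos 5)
Your overall strategy is the same as the paper's: reduce to an alternating word, use equivariance and loxodromicity of $f|_A$, $g|_B$ to get large displacement in a chain of translated domains, and then force a geodesic from $X$ to $wX$ to pass near each domain, yielding $d_{\FF_n}(X,wX)\gtrsim |w|_s$ and hence positive translation length, which by \cite{BFhyp} gives full irreducibility. The difference is in how the local-to-global step is carried out. The paper does \emph{not} use the Behrstock inequality here at all: it passes to the auxiliary graph $\C_n$ of rank-one factors (quasi-isometric to $\FF_n$), where the set $X_A$ of vertices failing to project to $A$ has diameter $\le 2$, and proves an inductive statement (Proposition \ref{progress}) showing that under the hypotheses ``$X_i\cap X_{i+1}=\emptyset$'' and ``$d_{A_i}(X_{i-1},X_{i+1})>2M$'', every geodesic from $X_j$ to $X_{j+k}$ meets each intermediate $X_i$; the only analytic input is the Bounded Geodesic Image Theorem (Proposition \ref{BGITc}), together with Lemma \ref{cyclicsuffices} to reroute geodesics through cyclic factors. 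The filling hypothesis enters only to guarantee $X_A\cap X_B=\emptyset$ and $\mathrm{diam}(\pi_A(X_B))\le 2D$.

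The genuine gap in your plan is the step you yourself flag: promoting the per-block estimates $d_{D_j}(p_{j-1}X,p_jX)>T$ to $d_{D_j}(X,wX)>M$, and then extracting a count of $k$ from the ``exit vertices.'' Your Behrstock bookkeeping requires the inequality of Theorem \ref{Behrstock} to apply to pairs of domains $D_i, D_j$ with $|i-j|\ge 2$, i.e.\ to $A$ and $(p_{i-1}^{-1}p_{j-1})B$ for an arbitrary subword $p_{i-1}^{-1}p_{j-1}$ of $w$; nothing in the hypotheses guarantees these overlap (consecutive domains do, since $f(A)=A$ reduces that case to $A$ versus $B$, but non-consecutive ones do not reduce this way). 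Establishing that the domains are pairwise in ``general position'' and coherently ordered is exactly the content of the induction in Proposition \ref{progress} (whose conclusion includes that the $X_i$ are pairwise disjoint) — it is not an input you can assume. Likewise, your final counting argument needs the exit vertices for distinct $D_j$ to be genuinely separated; in $\FF_n$ the set of vertices failing to project to a factor is not obviously of bounded diameter, which is precisely why the paper works in $\C_n$, where $\mathrm{diam}(X_A)\le 2$ makes the count immediate. So the skeleton is right and the route via Behrstock is plausible in principle (it is the style of argument in the mapping class group literature following \cite{Ma2}), but as written the proposal defers rather than proves the one step where all the work lies.
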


\noindent See Section \ref{constructing} for a stronger statement. Theorem \ref{introConstruct} adds a new construction of fully irreducible automorphisms to the methods found in \cite{CPtwisting}, where they arise as compositions of Dehn twists, and in \cite{KLping}, where they are compositions of powers of other fully irreducible automorphisms.

As a final remark, we warn the reader that the projection $\pi_A(\cdot)$ is into the free factor complex of $A$ and $d_A(\cdot,\cdot)$ denotes distance in $\FF(A)$. This is different from \cite{BFproj} where these symbols denote projections and distance in the free \emph{splitting} complex of $A$, denoted $\S(A)$. Because of the simple conditions under which subfactor projections into the free factor complex are defined, we hope that this note convinces the reader that projecting to the factor complex of a free factor is a useful notion of projection. An entirely different type of projection for free groups appears in \cite{SS}, and the relationship between these projections is explained in \cite{Tayl1}. \\

\noindent \textbf{Acknowledgements.}  Many thanks are due to Mladen Bestvina for his insight and helpful conversation, as well as his encouragement to write up this note. The author also thanks Alan Reid for his advice and support and Hossein Namazi for suggesting the use of the graph $\C_n$ in Section \ref{constructing}.

\section{Background}\label{background}
We briefly review some background material needed for this note and refer the reader to the references below for additional details. Denote by $F_n$ the free group of rank $n$ and by $\Out(F_n)$ its group of outer automorphisms. A \emph{graph} is a $1$-dimensional CW complex and a \emph{tree} is a simply connected graph. A finite graph is a \emph{core} graph if all its vertices have valence at least $2$ and any connected graph with finitely generated, nontrivial fundamental group has a unique core subgraph that carries its fundamental group. A core graph has a unique CW structure, or \emph{triangulation}, where each vertex has valence least $3$ and we refer to vertices and edges in this triangulation as \emph{natural}. If the modifier natural is omitted then we are referring to the graph with its given triangulation.

By a \emph{free splitting} of $F_n$, we mean a minimal action of $F_n$ on a nontrivial simplicial tree $T$ with trivial edge stabilizers. Recall that the action $F_n \curvearrowright T$ is \emph{minimal} if there is no invariant subtree. By Bass-Serre theory, free splittings of $F_n$ correspond to graph of groups decompositions of $F_n$ with trivial edge groups; we will make free use of both of these perspectives.  An equivariant  map $T \to T'$ between splittings is a \emph{collapse map} if all point preimages are connected. In this case, we say that $T$ \emph{refines} $T'$. The splittings $T,T'$ are \emph{conjugate} if there is a equivariant homeomorphism $T \to T'$. 

The \emph{free splitting complex} of $F_n$, denoted $\S_n$, is the complex whose vertices are conjugacy classes of $1$-edge free splittings of $F_n$ and two vertices are joined by an edge if they have a common refinement. See \cite{HMsplit} for details. The \emph{free factor complex} of $F_n$, denoted $\FF_n$, for $n\ge 3$ is the complex whose vertices are conjugacy classes of free factors and factors $A_0, \ldots, A_n$ span an $n$-simplex if after choosing representatives and possibly reordering $A_0 \subset  \ldots \subset A_n$. $\FF_n$ was introduced in \cite{HVff}. When $n =2$, $\FF_2$ is modified to be the graph whose vertices are conjugacy classes of rank $1$ free factors and two vertices are joined by an edge if there are representatives of each that together form a basis for $F_2$. This makes $\FF_2$ into the standard Farey graph. Remark that throughout this note, we sometimes blur the distinction between a free factor and its conjugacy class when it is clear from context what is meant.

Both $\FF_n$ and $\S_n$ are known to be hyperbolic. This was first show for $\FF_n$ in \cite{BFhyp} and for $\S_n$ in \cite{HMsplit}. See also \cite{KapRaf,HHhyp}. Relating these complexes, there is a coarse $4$-Lipschitz map $\pi: \S_n \to \FF_n$ given by mapping the splitting $T$ to its vertex stabilizers in $\FF_n$. For an arbitrary free splitting $T$ of $F_n$, we use the same notation to denote the map that associates to $T$ the set of free factors that arise as a vertex stabilizer of a one-edge collapse of the $T$.

To study $\Out(F_n)$, Culler and Vogtmann introduces Outer space $\X_n$, the space of metric graphs marked by $F_n,$ or equivalently, the space of minimal, proper actions of $F_n$ on simplicial $\mathbb{R}$-trees \cite{CVouter}. Recall that a \emph{marking} of the graph $G$ is a homotopy equivalence $\phi: R_n \to G$, where $R_n$ is the rose with $n$ petals whose fundamental group has been identified with $F_n$. A metric $l: E(G) \to \mathbb{R}_{+}$ on the marked graph $G$ is an assignment of a positive real number, or length, to each edge of $G$ and a marked metric graph is the ordered triple $(G,\phi,l)$, which we usually simplify to $G$. The volume of $G$ is the sum of the lengths of the edges of $G$. \emph{Outer space} $\X_n$ is defined to be the space of marked metric core graphs of volume one, up to equivalence. Here, $(G,\phi,l)$ and $(G',\phi',l')$ are equivalent if there is an isometry $i: G \to G'$ that is homotopic to $\phi' \circ \phi^{-1} : G \to G'$. In general, any map $h: G \to G'$ homotopic to $\phi' \circ \phi^{-1}$ is called a  \emph{change of marking}. For $G \in \X_n$ and $\alpha$ a conjugacy class of $F_n$, let $l_G(\alpha)$ denote the length of the immersed loop in $G$ that correspond to $\alpha$ through the marking for $G$. We use the notation $\hat{\X_n}$ to denote \emph{unprojectivized Outer space}, where the requirement that graphs have volume one is dropped. 

We consider $\X_n$ with its Lipschitz metric defined by
$$d_{\X}(G,G') = \inf \{ \log  L(h): h \simeq \phi' \circ \phi^{-1}\}, $$
where $L(h)$ is the Lipschitz constant for the change of marking $h$ and $\phi: R_n \to G$ and $\phi':R_n \to G'$ are the corresponding markings.  We remark that this (asymmetric) metric induces the standard topology on $\X_n$ that is got by considering lengths of immersed loops representing conjugacy classes in $F_n$ \cite{FMout}. Also, viewing $\X_n$ as the space of minimal, proper $F_n$-actions on simplicial $\mathbb{R}$-trees, we have the map $\pi: \X_n \to \S_n \to \FF_n$, as described above. Note that free factors in the image $\pi(G)$ of $G \in \X_n$ are represented by embedded subgraphs of $G$.

It is well known that the infimum in the definition of the Lipschitz metric is realized by some (non-unique) \emph{optimal} map \cite{FMout, BFhyp}. We briefly describe the folding path induced by an optimal $f:G \to G'$ and refer to \cite{BFhyp} for more details. First, a \emph{illegal turn structure} on $G$ is an equivalence relation on the set of directions at each vertex of $G$; the equivalence classes are called \emph{gates}. Here, a \emph{turn} is a unordered pair of distinct directions at a vertex and a turn is \emph{illegal} if both directions are contained in the same gate and is \emph{legal} otherwise. An illegal turn structure is a \emph{train track structure} if, in addition, every vertex has at least $2$ gates. For marked graphs $G,G' \in \X_n$ any piecewise linear change of marking map $h: G \to G'$ induces an illegal turn structure on $G$ whose gates are the directions at each vertex that are identified by $h$. In fact, there is always a change of marking $f:G \to G'$, called an \emph{optimal map}, that is constant slope (i.e. stretch) on each edge of $G$ with the property that the subgraph $\triangle(f) \subset G$ consisting of edges of maximal slope, $L(f)$, is a core subgraph and that the illegal turn structure on $G$ induced by $f$ restricts to a train track structure on $\triangle(f)$ \cite{FMout, Bbers}. From these properties, it follows that $f$ has minimal Lipschitz constant over all change of markings $G \to G'$. 
If, in addition, $\triangle(f) = G$, i.e. every edge is stretched by $L(f)$, then there is an induced \emph{folding path} $t \mapsto G_t$ joining $G$ and $G'$ in $\X_n$. Such a path is locally obtained by folding all illegal turns at unit speed and then rescaling to maintain volume one. For each $a \le b$, there is an induced optimal map $f_{ab}:G_a \to G_b$. These folding maps compose naturally and send legal segments to legal segments, where a \emph{legal segment} of $G_a$ is an immersed path that makes only legal turns.  See \cite{BFhyp} for a detailed construction. Arbitrary points $G,G' \in \X_n$ are joined by a geodesic path that first rescales edge lengths of $G$ and is then followed by a folding path. For a folding path $G_t$ in $\X_n$, a family of subgraphs $H_t \subset G_t$ is called \emph{forward invariant} if for all $a \le b$, $H_a$ maps into $H_b$ under the folding map $f_{ab}:G_a \to G_b$.

Finally, we recall the projection a splitting of $F_n$ to the free factor complex of a subfactor. See \cite{Tayl1} for details. For $G \in \X_n$ and a rank $\ge 2$ free factor $A$ we can consider the core subgraph of the cover of $G$ corresponding to the conjugacy class of $A$. We denote this marked $A$-graph by $A|G$ and the associated immersion by $p: A|G \to G$. Pulling back the metric on $G$, we obtain $A|G \in \hat{\X}(A)$. Denote by $\pi_A(G) = \pi(A|G) \subset \FF(A)$ the projection of $A|G$ to the free factor complex of $A$. Alternatively, if $G$ corresponds to the action $F_n \curvearrowright T$ (i.e. $T$ is the universal cover of $G$) with \emph{minimal $A$-subtree} $T^A$, then $A \curvearrowright T^A$ represents a point in $\hat{\X}(A)$. The projection $\pi_A(T) = \pi_A(G) \subset \FF(A)$ is the set of free factors of $A$ that arise as vertex stabilizers of one-edge collapses of $T^A$. Note that this projection is defined whenever $T$ is a splitting of $F_n$ where $A$ does not fix a vertex (i.e. where $T^A$ is not trivial).

For a free factor $A$ of $F_n$, we use the symbol $d_A$ to denote distance in $\mathcal{F}(A)$ and for $F_n$-trees $T_1,T_2$ we use the shorthand 
$$d_A(T_1,T_2) := d_A (\pi_A (T_1), \pi_A(T_2)) = \text{diam}_A(\pi_A (T_1) \cup \pi_A(T_2)),$$
when both projections are defined. 

\section{Folding paths and the Bestvina-Feighn projections} \label{BFfolding}
Let $A$ and $B$ be (conjugacy classes of) free factors of $F_n$ with $\mathrm{rank}(A) \ge 2$. Suppose that $A$ and $B$ are not disjoint and that $A$ is not contained in $B$, up to conjugation. In this case, we say that $B$ \emph{meets} $A$. Define \emph{the projection of $B$ to the free factor complex of $A$} to be the following subset of $\FF(A)$:
\begin{eqnarray*}
\pi_A(B) &=& \bigcup \{\pi_A(T): T \text{ is a splitting of } F_n \text{ with vertex stabilizer } B  \} \\
              &=&  \bigcup \{\pi_A(G): G\in \X_n \text{ and } B|G \subset G \text{ is embedded}  \}. 
\end{eqnarray*}
In other words, $\pi_A(B)$ is the set of vertex groups of splittings of $A$ that are refined by the splitting $A \curvearrowright T^A$, where $T$ is any free splitting with vertex stabilizer $B$. For convenience, if $A \subset B$ or $A$ and $B$ are disjoint we define $\pi_A(B)$ to be empty and say that $B$ \emph{misses} $A$. If $A$ meets $B$ and $B$ meets $A$, then  both projections are nonempty and we say that $A$ and $B$ \emph{overlap}. Note that the conditions for $\pi_A(B)$ to be nonempty are precisely that the tree $T^A$ is non-degenerate for any choice of $T$ with $B$ as a vertex stabilizer. The main result of this note is that $\mathrm{diam}(\pi_A(B))$ is uniformly bounded and, therefore, can be used as a coarse projection. This is shown in \cite{BFproj} in the case that either $d_{\FF}(A,B) >4$ or $A$ and $B$ have the same color in a specific finite coloring of $\FF_n$. This, however, excludes cases of interest; for example when the free factors have nontrivial intersection, as in \cite{Tayl1}. We note that by ``uniformly bounded'' we mean bounded by a constant depending only on $n$, the rank of $F_n$. Unlike the subsurface case, where the bound is $3$, we do not explicitly compute this constant.

Much of the machinery needed to prove Theorem \ref{main} has already been obtained in \cite{BFproj}. We recall some of the technical results from that paper that are needed here. Suppose that $G_t$ is a folding path for $t\in [\alpha,\omega]$ as in Section \ref{background} and that $A$ is a free factor. Then for all $t \in [\alpha, \omega]$, we have the immersion $p_t: A|G_t \to G_t$ corresponding to the core of the $A$-cover of $G_t$ and $A|G_t$ induces a path in $\hat{\X}(A)$. The results of \cite{BFproj} explain the behavior of the path $A|G_t$ and track the progress of $\pi_A(G_t) = \pi(A|G_t)$ in $\FF(A)$. Note that $p_t: A|G_t \to G_t$ induces an illegal turn structure on $A|G_t$. Call a valence $2$ vertex, i.e. a vertex appearing in the interior of a natural edge, an \emph{interior illegal turn} if it has only one gate.

\begin{lemma}[Lemma 3.1 of \cite{BFproj}] \label{lemma3.1}
For a folding path $G_t$, $t\in [\alpha , \omega]$ and a finitely generated subgroup $A < F_n$, the interval $[\alpha, \omega]$ can be divide into three subintervals $[\alpha, \beta)$, $[\beta, \gamma)$, and $[\gamma,\omega]$ so that the following properties characterize the restriction of $A|G_t$ to the middle interval $[\beta, \gamma)$: all vertices of $A|G_t$ have $\ge 2$ gates, there are no interior illegal turns, and all natural edges of $A|G_t$ have length $< 2$.  Moreover, the images of $\{A|G_t: t\in [\alpha, \beta)\}$ and $\{ A|G_t: t\in [\gamma, \omega]\}$ in $\S(A)$ (and $\FF(A)$) have uniformly bounded diameter.
\end{lemma}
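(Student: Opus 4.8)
The plan is to follow the three defining properties along the path $t \mapsto A|G_t$ and show that the locus where all three hold is an interval, using monotonicity under the lifted folding maps. Two facts drive everything. First, the folding maps $f_{ab}\colon G_a \to G_b$ send legal segments to legal segments, and this passes to the lifted maps $A|G_a \to A|G_b$; since the metric on $A|G_t$ is pulled back by the immersion $p_t$ (a local isometry), all length estimates on $G_t$ transfer verbatim to $A|G_t$. Second, after the volume rescaling built into a folding path, the length of a legal segment is non-decreasing in $t$ and grows without bound while folding persists.

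First I would locate $\beta$. The two gate conditions --- every natural vertex has $\ge 2$ gates, and there are no interior illegal turns --- together say exactly that every natural edge of $A|G_t$ maps to a legal path in $G_t$, i.e.\ that $A|G_t$ carries a genuine train-track structure and the path is (a reparametrization of) a folding path in $\hat{\X}(A)$. The key point is forward invariance: once the natural edges are legal their images under $f_{ab}$ are legal, and one must check that no new interior illegal turn is created as natural vertices degenerate. Granting this, the gate conditions hold on a terminal interval $[\beta,\omega]$, and I take this $\beta$.

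Next I would locate $\gamma$. On $[\beta,\omega]$ every natural edge is legal, so by the length estimate the maximal natural-edge length is non-decreasing; let $\gamma$ be the first time it reaches $2$. Since for $t<\beta$ the gate conditions already fail, the set of $t$ at which all three properties hold is exactly $[\beta,\gamma)$, the initial interval $[\alpha,\beta)$ is where the train-track structure fails, and the final interval $[\gamma,\omega]$ is where the structure holds but some natural edge has length $\ge 2$.

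The remaining, and I expect hardest, task is to bound the diameters of $\pi(\{A|G_t\})$ over the two outer intervals, in $\S(A)$ and hence (via the coarse $4$-Lipschitz map) in $\FF(A)$. For a single $t$ this is automatic: the vertex groups of one-edge collapses of the fixed graph $A|G_t$ are mutually compatible and number at most $3\,\mathrm{rank}(A)-3$, so $\pi(A|G_t)$ has diameter bounded in terms of $n$ alone. The issue is the union. On the final interval I would show that a natural edge of length $\ge 2$ persists and only lengthens under further folding, so the long legal segment it contains pins down a fixed subfactor and forces $\pi(A|G_t)$ to be coarsely constant for $t \in [\gamma,\omega]$ --- the same ``long legal segments have stable projections'' phenomenon that makes the projection well defined. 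On the initial interval I would argue that $f_{t\beta}$ effects only internal tightening of interior illegal turns and collapsing of one-gate vertices; this changes the metric but keeps the collapse structure within bounded combinatorial complexity, and since consecutive $A|G_t$ differ by a controlled amount of folding, $\bigcup_{t\in[\alpha,\beta)}\pi(A|G_t)$ remains within a bounded region of $\FF(A)$. In both cases the bound depends only on $\mathrm{rank}(A)\le n$.
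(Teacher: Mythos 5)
First, a point of orientation: the paper does not prove this statement at all --- it is quoted verbatim as Lemma 3.1 of \cite{BFproj} and used as a black box, so there is no in-paper proof to compare yours against. Judged on its own terms, your sketch has the right skeleton (two monotone conditions cutting $[\alpha,\omega]$ into three pieces, followed by diameter bounds on the two outer pieces, which is indeed how Bestvina--Feighn organize the argument), but the two places where the actual content of the lemma lives are asserted rather than proved. The first is the forward invariance of the train-track condition on $A|G_t$, which you explicitly defer (``granting this''). This is not routine: as $t$ increases the core of the $A$-cover and its natural cell structure change --- natural vertices can drop in valence, natural edges can merge, and gates of $G_t$ can coalesce --- so one must genuinely verify that no one-gate vertex or interior illegal turn is created. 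Relatedly, your claim that the maximal natural-edge length is non-decreasing on $[\beta,\omega]$ is not automatic, because a ``natural edge of $A|G_t$'' is not a persistent object along the path; without this, your definition of $\gamma$ as a first-passage time does not by itself show that the long-edge condition holds on all of $[\gamma,\omega]$.

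The second and larger gap is the bounded-diameter claim for the two outer intervals, which is the part of the lemma that actually gets used (e.g.\ in Lemma \ref{forest} and Theorem \ref{well}). Your observation that $\pi(A|G_t)$ has bounded diameter for each fixed $t$ is correct but irrelevant: the issue is the diameter of the union over a whole subinterval, and ``bounded combinatorial complexity at each time'' does not bound that union --- indeed on the middle interval the complexity is also bounded at each time and yet the projection can travel arbitrarily far in $\FF(A)$. What is needed is an explicit forward-invariant object whose associated splitting of $A$ changes by a bounded amount along the entire subinterval: on $[\gamma,\omega]$ this is the splitting of $A$ dual to (a point in) the persistent long natural edge, and on $[\alpha,\omega]$-initial segment it comes from the degeneracy of the illegal turn structure; in either case one must show the relevant subgraph or point-preimage is carried forward by the maps $A|G_a \to A|G_b$ and that the resulting vertex groups stay within bounded distance in $\S(A)$. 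Your phrases ``pins down a fixed subfactor'' and ``remains within a bounded region'' name the desired conclusions without supplying these arguments, so as written the proposal does not establish the lemma.
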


From this lemma, it is shown that the projection of the folding path $G_t$ to the free splitting (or free factor) complex of $A$ is an unparameterized quasi-geodesic with uniform constants. We will not need this fact in what follows.  Note that for $a,b \in [\beta ,\gamma)$, where $[\beta,\gamma)$ is the middle interval given in Lemma \ref{lemma3.1}, the folding map $f_{ab}: G_a \to G_b$ induces a map $A|G_a \to A|G_b$ between the cores of the $A$-covers.

For the immersion $p: A|G \to G$, define $\Omega \subset G$ as the set of edge of $G$ that are at least double covered by $p$ and set $\tilde{\Omega} \subset A|G$ to be the subgraph $p^{-1}(\Omega) \subset A|G$. If $\tilde{\Omega} = \emptyset$, then $A|G \to G$ is an embedding and we say that $A$ (or $A|G$) is \emph{embedded} in $G$. If $\tilde{\Omega}$ is a forest (a disjoint union of trees), then we say that $A$ (or $A|G$) is \emph{nearly embedded}. The following lemma states that if a folding path makes significant progress in $\FF(A)$ then $A$ must be nearly embedded along the path.

\begin{lemma}\label{forest}
Let $G_t$ be a folding path for $t \in [\alpha,\omega]$ and let $[\beta, \gamma)$ be the middle interval determined by Lemma \ref{lemma3.1}. Then after restricting $G_t$ to $t\in [\beta, \gamma)$, the subgraph $\tilde{\Omega}_t \subset A|G_t$ is forward invariant and if for some $t_0$, $\tilde{\Omega}_{t_0}$ is not a forest (i.e. $A$ is not nearly embedded in $G_{t_0}$), then $\pi_A(\{G_t : t\ge t_0 \})$ has uniformly bounded diameter in $\FF(A)$.
\end{lemma}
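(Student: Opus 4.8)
The plan is to treat the two assertions separately. For forward invariance I would work entirely within the middle interval $[\beta,\gamma)$, where by Lemma \ref{lemma3.1} each $A|G_t$ has all vertices with $\ge 2$ gates and no interior illegal turns, so that $A|G_t$ is its own core and the folding map $f_{ab}\colon G_a\to G_b$ induces a genuine map $\bar f_{ab}\colon A|G_a\to A|G_b$ fitting into the commutative square with the immersions $p_a,p_b$. The key point is that the covering multiplicity of $p_t$ cannot decrease under folding: if an edge $e$ of $G_a$ is at least double covered, with distinct preimages $\tilde e_1,\tilde e_2\subset A|G_a$, then by commutativity $p_b\bar f_{ab}(\tilde e_i)$ both lie over $f_{ab}(e)$, and $\bar f_{ab}(\tilde e_1)\neq\bar f_{ab}(\tilde e_2)$ because a single Stallings fold identifies lifts of two edges of $G_a$ that project to \emph{distinct} base edges, whereas $\tilde e_1,\tilde e_2$ project to the \emph{same} edge $e$. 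Viewing the folding path as a limit of folds, I conclude $\bar f_{ab}(\tilde\Omega_a)\subset\tilde\Omega_b$, which is forward invariance.

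For the diameter bound, I would first reduce to the middle interval: by Lemma \ref{lemma3.1} the image of $\{A|G_t : t\in[\gamma,\omega]\}$ in $\FF(A)$ already has uniformly bounded diameter, so it suffices to bound $\pi_A(\{G_t : t\in[t_0,\gamma)\})$. Since $\tilde\Omega_{t_0}$ is not a forest it contains an embedded essential loop $\ell$ representing a nontrivial conjugacy class $c$ in $A$. By forward invariance, together with the fact that the folding maps carry immersed loops to homotopic immersed loops, the class $c$ is represented by a loop $\ell_t\subset\tilde\Omega_t$ for every $t\in[t_0,\gamma)$.

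The heart of the argument is to extract from $\ell_t$ a single vertex of $\FF(A)$ that is uniformly close to $\pi_A(G_t)$ for all such $t$. When $c$ is carried by a proper free factor, I would let $F_c$ be its free factor support; since $\ell_t$ lies in the proper core subgraph $\tilde\Omega_t\subsetneq A|G_t$, collapsing a complementary subgraph exhibits a free factor lying within bounded distance of $\pi_A(G_t)$ and containing $c$, and every free factor containing $c$ contains the minimal carrier $F_c$, hence lies within distance $1$ of the fixed vertex $F_c$ in $\FF(A)$. Thus every $\pi_A(G_t)$ lies in a bounded neighborhood of $F_c$; combined with the tail estimate this yields the lemma.

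The main obstacle is the remaining case, where the persistent loop $c$ fills $A$ and hence $\tilde\Omega_t$ is forced to be all of $A|G_t$, so that no proper complementary free factor is available and the ``containing free factor'' argument fails. Here I expect to fall back on the double-cover length estimate of \cite{BFproj}: since every edge of $A|G_t$ is at least doubly covered, its image $p_t(A|G_t)$ has at most half the volume of $A|G_t$, so the folding along $[t_0,\gamma)$ does not definitely stretch $A|G_t$ and therefore cannot move $\pi_A(G_t)$ more than a bounded amount in $\FF(A)$. Making this final step quantitative, using only the features of the middle interval (natural edges of length $<2$ and the train track structure on $A|G_t$), is the point I expect to require the most care.
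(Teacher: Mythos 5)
Your overall strategy parallels the paper's: forward invariance of $\tilde{\Omega}_t$ via the observation that a fold never identifies two lifts of the \emph{same} base edge (this is Lemma 4.3 of \cite{BFproj}, which the paper simply cites), followed by a persistent immersed loop $x_t \subset \tilde{\Omega}_t$ that anchors $\pi_A(G_t)$ near a fixed vertex of $\FF(A)$. The genuine gap is in how you split the second half into cases. You divide according to whether the class $c$ fills $A$, and in the non-filling case you assert that ``$\ell_t$ lies in the proper core subgraph $\tilde{\Omega}_t\subsetneq A|G_t$.'' That properness does not follow from $c$ being non-filling: $\tilde{\Omega}_t$ is governed by the covering multiplicity of $p_t: A|G_t \to G_t$, not by the loop, and it can become all of $A|G_t$ at a later time even though $c$ lies in a proper factor (a non-filling class can perfectly well cross every edge of a given marked $A$-graph). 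At such a time neither of your cases applies as written: the ``miss an edge and collapse the complement'' step has no edge to miss, and your fallback is predicated on the hypothesis that $c$ fills, which is false in this case.

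The correct dichotomy is temporal, which is what the paper uses. Let $t_1$ be the first time (if any) at which $\tilde{\Omega}_{t_1} = A|G_{t_1}$. For $t\in[t_0,t_1)$ the subgraph is proper, so $x_t$ misses an edge of $A|G_t$ and $\pi_A(G_t)$ stays within bounded distance of the (fixed) free factor support of $x_0$ --- exactly your first argument, now correctly scoped. For $t\ge t_1$ one invokes Lemma 4.4 of \cite{BFproj}, which states that once $\tilde{\Omega}_{t_1}=A|G_{t_1}$ the projection $\pi_A(\{G_t : t\ge t_1\})$ is uniformly bounded; this is precisely the volume/double-cover estimate you sketch in your last paragraph, and it should be cited rather than reproved, since making it quantitative is nontrivial and is not attempted in the paper either. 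With that reorganization your proof closes and coincides with the paper's.
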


\begin{proof}
That $\tilde{\Omega}_t$ is forward invariant on the middle interval is contained in Lemma 4.3 of \cite{BFproj}. The other statement is essentially Lemma 4.4 in \cite{BFproj}. There, it is shown that if $\tilde{\Omega}_{t_1} = A|G$ then $\pi_A(\{G_t : t\ge t_1 \})$ is uniformly bounded. Since $\tilde{\Omega}_{t}$ is forward invariant it suffices to show that progress of $A|G_t$ in $\FF(A)$ is bounded so long as $\tilde{\Omega}_t$ is a proper subgraph of $A|G_t$ that is not a forest. Suppose this is the case for $\tilde{\Omega}_{t_0} \subset A|G_{t_0}$ and let $x_0$ be an immersed loop in $A|G_{t_0}$ that is contained in $\tilde{\Omega}_{t_0}$. Denote by $x_t$ the immersed representative of the image of $x_0$ through $A|G_{t_0} \to A|G_{t}$. Since $\tilde{\Omega}_t$ is a proper subgraph, $x_t$ fails to cross some edge of $A|G_t$. This implies that the cyclic free factor represented by $x_0$ has distance $\le 5$ from $\pi_A(G_t) = \pi (A|G_t)$ in $\FF_n$, so long as $\tilde{\Omega}_t$ is a proper subgraph. This completes the proof.
\end{proof}

\section{Diameter bounds} \label{diameter}
The following lemmas determine when the projection of a factor $B$ to the free factor complex of the factor $A$ is well-defined. The first provides a criterion for when two free factors can be embedded in a common marked graph and the second shows that the failure of a joint embedding is enough to block progress of subfactor projections along a folding path. We recall that in \cite{BFproj}, the authors show that if the finitely generated subgroup $A < F_n$ is nearly embedded in $G$, then $A$ is a free factor of $F_n$. Similar arguments are used to prove the following:

\begin{lemma}\label{jointlyembedded}
Suppose that $p: A|G \to G$ is the canonical immersion and that $B|G \subset G$ is an embedding for free factors $A$ and $B$ of $F_n$. Let $E^B$ be the collection of edges of $A|G$ that map to edges of $B|G$. If $\tilde{\Omega} \cup E^B \subset A|G$ is a forest, then there is a marked graph $G'$ where $A$ and $B$ are disjointly embedded.
\end{lemma}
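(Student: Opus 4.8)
The plan is to modify $G$ into a new marked graph $G'$ in which $A$ appears as an embedded core subgraph that has been ``pulled off'' both of its self-overlaps and of $B|G$; this follows the template of the Bestvina--Feighn argument that a nearly embedded subgroup is a free factor, while carrying $B|G$ along. Write $W = \tilde{\Omega} \cup E^B$, which is a forest by hypothesis. The key elementary observation is that every edge of $A|G$ lying outside $W$ is singly covered by $p$ (since it avoids $\tilde{\Omega}$) and maps into $G \smallsetminus B|G$ (since it avoids $E^B$). Thus, setting $A_0 = \overline{A|G \smallsetminus W}$, the immersion restricts to an embedding $p|_{A_0}\colon A_0 \hookrightarrow G$ whose image lies in the singly covered locus of $G$ and is disjoint from $B|G$.

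First I would use that $W$ is a forest to see that collapsing $W$ in $A|G$ is a homotopy equivalence, so the collapsed graph still carries the conjugacy class $A$; the content of the lemma is then to realize this collapsed picture geometrically and simultaneously separate it from $B$. To do so I would build $G'$ from $G$ by duplicating exactly the edges of $G$ covered by $W$ — namely the overlap $\Omega = p(\tilde{\Omega})$ together with the portion $p(E^B)$ of $B|G$ met by $A$ — assigning to $A$ its own parallel copies of these edges, so that $A$ is carried by an embedded subgraph $G'_A$ built from $A_0$ and the new copies of the $W$-edges, while $B|G$ survives untouched as a subgraph $G'_B$. By construction $G'_A$ meets the original edges of $G$ only along $A_0$, which is disjoint from $B|G$, so $G'_A$ and $G'_B$ are disjoint.

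Then I would certify that $G'$ is a legitimate point of $\X_n$: the newly introduced edges, together with the connecting structure recording how the duplicated strands are reattached, assemble into a forest precisely because $W = \tilde{\Omega} \cup E^B$ is a forest, and collapsing this forest recovers $G$ up to homotopy. Hence $\pi_1(G') \cong F_n$ and the marking of $G$ pulls back to a marking of $G'$, so that $A$ and $B$ are disjointly embedded in $G' \in \X_n$, as required.

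The hard part will be exactly this last bit of bookkeeping: arranging the duplication of the overlap edges so that the resulting graph $G'$ has the correct homotopy type (equivalently, so that the whole modification is effected through forest collapses) while simultaneously guaranteeing that the copy of $A$ is separated both from itself and from $B|G$. This is where the strengthened hypothesis — that $\tilde{\Omega}\cup E^B$, and not merely $\tilde{\Omega}$, is a forest — is indispensable: the condition on $\tilde{\Omega}$ alone yields the near-embedding of $A$ as in \cite{BFproj}, whereas the inclusion of $E^B$ is precisely what allows $A$ to be pulled off of $B$ without creating new loops.
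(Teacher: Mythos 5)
There is a genuine gap at exactly the point you flag as ``the hard part'': the duplication construction does not produce a graph of the correct homotopy type, and the justification you offer for it does not hold. Your $G'$ is obtained from $G$ by \emph{adding} a new copy of the forest $W=\tilde{\Omega}\cup E^B$, attached along the images of the frontier vertices where $W$ meets $A_0$, with nothing deleted. Attaching a tree to a connected graph along $k$ of its vertices raises the first Betti number by $k-1$; equivalently, collapsing that tree afterwards does not recover $G$ but rather $G$ with those $k$ points identified, which is not homotopy equivalent to $G$ once $k\ge 2$. Since a typical component of $W$ (say, a single edge of $E^B$ in the interior of $A|G$) has two frontier vertices, your $G'$ generically has rank $>n$ and is not a marked $F_n$-graph. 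A concrete instance: let $F_3=\langle a,b,c\rangle$, let $G$ be the standard rose, $B=\langle b\rangle$ and $A=\langle ab,c\rangle$. Then $A|G$ has vertices $*,v$, a $c$-loop at $*$, an edge $e_a$ from $*$ to $v$ and an edge $e_b$ from $v$ to $*$; one checks $\tilde{\Omega}=\emptyset$ and $W=E^B=\{e_b\}$, a forest. Your recipe adds to $G$ a parallel copy of the $b$-petal running from $p(v)$ to $p(*)$ --- both the unique vertex of the rose --- i.e.\ a new loop, producing a rank-$4$ rose. (The same example shows that $p|_{A_0}$ need not be an embedding: it is injective on edges, since edges of $A_0$ avoid $\tilde{\Omega}$, but it may identify vertices.)

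The repair requires deleting edges of $G$ to compensate for the attached copy of $A$, and this is what the paper does. Extend the forest $\tilde{\Omega}\cup E^B$ to a maximal tree $T$ of $A|G$ and let $E$ be the complementary edges; these avoid $\tilde{\Omega}$, so $p$ maps $E$ bijectively to $p(E)$, and they avoid $E^B$, so $B|G\subset G\setminus p(E)$. Set $G'=A|G\vee_{x=p(x)}(G\setminus p(E))$, a one-point wedge. Now the count balances: the $|E|=\mathrm{rank}(A)$ deleted edges exactly offset the rank of the wedged-on copy of $A|G$, and the homotopy equivalence $G'\to G$ is certified by folding $T$ into $G\setminus p(E)$ (a tree, so the folds preserve homotopy type) and checking that the resulting morphism to $G$ is bijective; both $A|G$ and $B|G\subset G\setminus p(E)$ are then embedded and disjoint in $G'$. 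Your opening observations --- that edges of $A|G$ outside $W$ are singly covered and map off of $B|G$ --- are correct and are precisely the facts the paper's construction exploits; the missing idea is the deletion of $p(E)$ combined with the single-point wedge, rather than a multi-point attachment of duplicated edges.
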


\begin{proof}
Enlarge the forest $\tilde{\Omega} \cup E^B$ to a maximal tree $T$ and let $E$ be the set of edges not contained in $T$. These edges are in bijective correspondence with the edges of $p(E)$, since they are not in $\tilde{\Omega}$. For $x \in T$, define 
$$G' = A|G \vee_{x=p(x)} (G \setminus p(E)). $$
As in \cite{BFproj}, we have the morphism (edge isometry) $G' \to G$ induced by $p: A|G \to G$ and the inclusion of $G \setminus p(E)$ into $G$. Folding the edges of the $T$ into $G \setminus p(E)$, we arrive at an intermediate graph $G''$ with an induced morphism $G'' \to G$. Because $T$ is a tree, such folds do not change the homotopy type of the graph. Further, since no edges outside of $T$ are identified when mapped to $G$, the morphism $G'' \to G$ is bijective. We conclude that the map $G' \to G$ is a homotopy equivalence and that $G'$ contains disjoint, embedded copies of both $A|G' = A|G$ and $B|G' =B|G \subset G \setminus p(E)$.

\end{proof}

We show that for any marked graphs $G$ and $G'$ where $B$ is embedded, $d_A(G,G')$ is uniformly bounded. For this, fix a marked graph $G_0$ that is a rose and for which $B$ is embedded. For any metric graph $G \in \mathcal{X}_n$ with $B|G$ embedded we can choose edge lengths for $G_0$ so that $G_0\in \mathcal{X}_n$ and there is an optimal map $f: G_0 \to G$ with $\triangle(f) = G_0$ and $f(B|G_0) \subset B|G$. Then the folding path $\{G_t: t \in [0,T]\}$ induced by $f$ with $G_T =G$ has the property that $B|G_t$ is embedded in $G_t$ for all $t \in [0,T]$, and for all $s \le t$, $f_{st}:G_s \to G_{t}$ maps $B|G_s$ into $B|G_t$. Hence, $B|G_t$ is forward invariant. It suffices to show that the image  $\pi_A(G_t) \subset \FF(A)$ of the folding path is bounded by a constant depending only on $n$. To do this, first restrict to a subinterval $[a,b] \subset [0,T]$ where
\begin{enumerate}
\item for $t \in [a,b]$, the immersion $p_t: A|G_t \to G_t$ induces a train track structure on $A|G_t$, i.e. $A|G_t$ has no interior illegal turns and all vertices have $\ge 2$ gates. Also, each natural edge of $A|G_t$ has length $< 2$,
\item for $t\in [a,b]$, the subgraph $\tilde{\Omega}_t \subset A|G_t$ is a forward invariant forest, and
\item the projections $\pi_A (\{G_t : t \in [0,a] \})$ and $\pi_A (\{G_t : t \in [b,T] \})$ in $\FF(A)$ have uniformly bounded diameter.
\end{enumerate}

Note the such an interval exists by Lemma \ref{lemma3.1} and Lemma \ref{forest}. For $p:A|G_t \to G_t$, let $E^B_t \subset A|G_t$ be the set of edges in the triangulation induced from $G_t$ that project to edges of $B|G_t \subset G_t$, as in Lemma \ref{jointlyembedded}.

\begin{lemma} \label{progressbound}
With $\{G_t: t \in [a,b]\}$ as above, if there is a $t_0 \in [a,b]$ so that $A|G_{t_0}$ has an embedded loop $x_0$ all of whose edges are contained in $\tilde{\Omega}_{t_0} \cup E^B_{t_0}$, then the projection $\pi_A(\{A|G_t: t\ge t_0\})$ has uniformly bounded diameter in $\FF(A)$.
\end{lemma}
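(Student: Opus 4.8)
The plan is to fix the cyclic free factor $[x_0] < A$ determined by the loop $x_0$ and to show that $\pi(A|G_t)$ stays within a uniform distance of the single vertex $[x_0] \in \FF(A)$ for every $t \ge t_0$. Since $[x_0]$ does not depend on $t$, this bounds $\mathrm{diam}(\pi_A(\{A|G_t : t \ge t_0\}))$ by twice that distance. The engine is the same as in Lemma \ref{forest}: a conjugacy class carried by a forward-invariant proper subgraph cannot move its projection. The relevant subgraph here is $Y_t := \tilde{\Omega}_t \cup E^B_t$, and the first point to record is that $x_t$, the immersed image of $x_0$ in $A|G_t$, is contained in $Y_t$ for all $t \ge t_0$.

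The first step is to prove that $Y_t$ is forward invariant on $[a,b]$. For $\tilde{\Omega}_t$ this is property $(2)$ above (Lemma \ref{forest}). For $E^B_t$ I would use that $B|G_t$ is forward invariant, as established in the construction preceding the lemma, together with the fact that the folding maps commute with the covering immersions: if $a \le b$ and $e$ is an edge of $A|G_a$ with $p_a(e) \subset B|G_a$, then the image of $e$ in $A|G_b$ projects under $p_b$ into $f_{ab}(B|G_a) \subset B|G_b$, so every edge it crosses lies in $E^B_b$. Hence $f_{ab}$ carries $Y_a$ into $Y_b$, and in particular $x_t \subset Y_t$ for all $t \ge t_0$ because $x_0 \subset Y_{t_0}$ by hypothesis.

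The main dichotomy is then whether $x_t$ crosses every edge of $A|G_t$. If $x_t$ misses some edge $e$, then $[x_0]$ is carried by the proper subgraph obtained by deleting the interior of $e$, hence by a proper free factor of $A$ that is visible as, or adjacent to, a vertex of $\pi(A|G_t)$; exactly as in the proof of Lemma \ref{forest} this shows $d_A([x_0], \pi(A|G_t))$ is uniformly bounded, and we are finished at time $t$. This covers every $t$ for which $Y_t \neq A|G_t$, since then $x_t \subset Y_t$ automatically omits an edge.

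The remaining, and main, obstacle is the \emph{saturated} case $Y_t = A|G_t$, in which $x_t$ may fill $A|G_t$. Here I would exploit the two features that distinguish $E^B_t$ from a generic subgraph. First, since $A$ is not contained in $B$ up to conjugation, the immersion $p_t$ cannot map $A|G_t$ into $B|G_t$, so $E^B_t$ is a proper subgraph; saturation then forces every edge of $A|G_t$ not projecting to $B|G_t$ to be doubly covered, pushing the double-covered part $\tilde{\Omega}_t$ to the situation governed by Lemma 4.4 of \cite{BFproj}, which bounds the projection when $A|G_t = \tilde{\Omega}_t$. Second, the singly covered edges of $Y_t$ embed into $B|G_t$, a proper free factor of $F_n$, and I would use this embedding to replace their contribution by a bounded modification of the graph, as in the folding construction of Lemma \ref{jointlyembedded}, so that the filling loop $x_t$ is seen to miss an edge after a controlled collapse. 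Reconciling these two mechanisms and extracting one uniform constant is the delicate point; note that the hypotheses of Lemma \ref{jointlyembedded} are arranged precisely so that their failure, namely the presence of the loop $x_0$ in $Y_{t_0}$, is exactly what drives this case.
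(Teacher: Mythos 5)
Your setup is consistent with the paper: forward invariance of $Y_t=\tilde{\Omega}_t\cup E^B_t$ along the middle interval, hence $x_t\subset Y_t$, and the easy branch in which $x_t$ omits an edge of $A|G_t$ are all fine. The genuine gap is the saturated case $Y_t=A|G_t$, which you correctly identify as the crux but do not resolve, and neither mechanism you propose for it can work. Lemma 4.4 of \cite{BFproj} applies when $\tilde{\Omega}_t$ is all of $A|G_t$, whereas on $[a,b]$ the subgraph $\tilde{\Omega}_t$ is by hypothesis a forest, so that lemma is never relevant here; and the ``controlled collapse'' via Lemma \ref{jointlyembedded} is unavailable precisely because the hypothesis of the present lemma is that $\tilde{\Omega}_{t_0}\cup E^B_{t_0}$ contains an embedded loop, i.e.\ is \emph{not} a forest. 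So the case in which $x_t$ crosses every edge of $A|G_t$ is left open, and it does occur.

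The idea you are missing is that one does not need $x_t$ to miss an edge: by the criterion in \cite{BFhyp} it suffices that $x_t$ cross some fixed edge a uniformly \emph{bounded number of times}, and this already places $\pi(A|G_t)$ at bounded distance in $\FF(A)$ from the cyclic factor represented by $x_0$. The paper therefore fixes an edge $e$ of $A|G_t$ outside $E^B_t$ (such an edge exists since $A$ is not contained in $B$) and bounds the number of times $x_t$ crosses $e$, with no case division. Concretely: since $x_0$ is embedded and $A|G_{t_0}$ has no interior illegal turns, $x_0$ decomposes into a bounded number of legal segments of $\tilde{\Omega}_{t_0}$ together with at most $3\cdot\mathrm{rank}(B)-3$ edges of $E^B_{t_0}$ not lying in $\tilde{\Omega}_{t_0}$. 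No edge of $E^B_{t_0}$ ever maps over $e$, by forward invariance of $B|G_t$ and the choice $e\notin E^B_t$. And a legal segment of $\tilde{\Omega}_{t_0}$ passes over an interior point $p$ of $e$ at most twice, since more passes would force $p$ into the core of $\tilde{\Omega}_t$, contradicting that $\tilde{\Omega}_t$ is a forest. Hence $x_t$ crosses $e$ at most $2\cdot|\text{legal segments of }\tilde{\Omega}_{t_0}|$ times, a bound depending only on $n$, which is what the lemma requires.
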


\begin{proof}
Let $x_t$ be the image of $x_0$ in $A|G_t$ pulled tight, i.e. its immersed representative. We show that for any edge $e$ of $A|G_t$ not in $E^B_t$, $x_t$ crosses $e$ a bounded number of times. Since by assumption $A$ is not contained in $B$ such a edge is guaranteed to exist. By \cite{BFhyp}, this implies that $\pi_A(G_t) = \pi(A|G_t)$ has bounded distance from the cyclic factor of $A$ represented by $x_0$, for all $t \ge t_0$.

Suppose that $e$ is an edge of $A|G_t$ not contained in $E^B_t$ and let $p$ be a point in the interior of $e$. Note that $x_0$ is composed of a bounded number of legal segments of $\tilde{\Omega}_{t_0}$ and edges of $E^B_{t_0}$. To see this, recall that since $x_0$ is embedded it consists of a bounded number of natural edges of $A|G_{t_0}$, each of which is legal because $A|G_t$ has no interior illegal turns. Also, the number of edges of $E^B_{t_0}$ not appearing in $\tilde{\Omega}_{t_0}$ is bounded by $3\cdot \mathrm{rank}(B) -3$ since there are no more of these edges than edges of $B|G_{t_0}$. Hence, each natural edge of $A|G_{t_0}$ crossed by $x_0$ is contained in a bounded number of legal segments of $\tilde{\Omega}_{t_0}$ plus edges of $E^B_{t_0}$ that are not contained in $\tilde{\Omega}_{t_0}$.

Let $s$ be a legal segment of $\tilde{\Omega}_{t_0}$ that maps overs $p$ more than twice. Then by forward invariance of $\tilde{\Omega}_t$ and legality of $s$, $p$ must be contained in the core of $\tilde{\Omega}_t$. This contradicts the assumption that $\tilde{\Omega}_t$ is a forest. For an edge of $E^B_{t_0}$ we note that by forward invariance of $B|G_t$, no edge of $B|G_{t_0}$ can map over the edge $p(e)$. Hence, no edge of $E^B_{t_0}$ can map over $e$. We conclude that $x_t$ crosses $e$ no more that $2 \cdot |$legal segments of $\tilde{\Omega}_{t_0}|$ times. Since we have seen that this quantity is bounded by a constant depending only on the rank of $G$, we conclude that $\pi_A(\{G_t: t\ge t_0\})$ is uniformly bounded.
\end{proof}

Together, these lemmas complete the proof of our main theorem.

\begin{theorem}\label{well}
Let $A$ and $B$ be conjugacy classes of free factors of $F_n$ with $\mathrm{rank}(A) \ge 2$. Then either  $A$ and $B$ are disjoint, $A\subset B$, or $\pi_A(B)$ is well-defined with uniformly bounded diameter.
\end{theorem}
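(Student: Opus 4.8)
The plan is to reduce Theorem~\ref{well} to the three technical lemmas just established, using the folding-path machinery to control the diameter of $\pi_A(B)$. The definition of $\pi_A(B)$ takes a union over \emph{all} splittings $G$ with $B$ embedded, so the essential content is a bound on $d_A(G,G')$ that is uniform over all such pairs $G,G'$. I would fix a single convenient rose $G_0$ with $B$ embedded once and for all, and then for an arbitrary $G$ with $B|G$ embedded, invoke the construction already set up in the discussion preceding Lemma~\ref{progressbound}: rescale $G_0$ and build an optimal map $f\colon G_0 \to G$ with $\triangle(f)=G_0$ that carries $B|G_0$ into $B|G$, yielding a folding path $\{G_t\}_{t\in[0,T]}$ along which $B|G_t$ is forward invariant and embedded. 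Because $d_A(G,G') \le d_A(G,G_0) + d_A(G_0,G')$ (up to the additive constant from passing between $\S(A)$ and $\FF(A)$), it suffices to bound $\mathrm{diam}\,\pi_A(\{G_t\})$ by a constant depending only on $n$; the claim for all of $\pi_A(B)$ then follows by comparing every admissible $G$ to the fixed basepoint $G_0$.

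Next I would restrict attention to the middle subinterval $[a,b]\subset[0,T]$ supplied by Lemma~\ref{lemma3.1} and Lemma~\ref{forest}, on which $A|G_t$ carries a train-track structure (no interior illegal turns, every vertex with $\ge 2$ gates, natural edges of length $<2$) and $\tilde{\Omega}_t$ is a forward-invariant forest, while the two outer pieces $\pi_A(\{G_t : t\in[0,a]\})$ and $\pi_A(\{G_t : t\in[b,T]\})$ already have uniformly bounded diameter by property~(3). So the whole problem collapses to bounding progress of $\pi_A(G_t)$ over the middle interval. The dichotomy driving the argument is whether, somewhere on $[a,b]$, the graph $A|G_t$ contains an embedded loop all of whose edges lie in $\tilde{\Omega}_t \cup E^B_t$. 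If such a loop $x_0$ exists at some $t_0$, then Lemma~\ref{progressbound} immediately pins $\pi_A(\{G_t : t\ge t_0\})$ to a bounded neighborhood of the cyclic factor of $x_0$, and since $A$ is not contained in $B$ the required ``escaping'' edge exists; combined with the outer-interval bounds this finishes the diameter estimate on the tail.

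The complementary case is where \emph{no} such loop occurs anywhere on $[a,b]$, i.e.\ $\tilde{\Omega}_t \cup E^B_t$ contains no embedded loop and is therefore a forest for every $t$ in the middle interval. Here I would apply Lemma~\ref{jointlyembedded}: since $B|G_t$ is embedded and $\tilde{\Omega}_t \cup E^B_t \subset A|G_t$ is a forest, there is a marked graph $G'_t$ in which $A$ and $B$ are \emph{disjointly} embedded. But by the definition of disjointness recalled in the introduction, disjointly embeddable free factors are exactly the disjoint ones, contradicting the hypothesis of the theorem that $A$ and $B$ are not disjoint (and $A\not\subset B$). Thus the no-loop case cannot persist throughout $[a,b]$ unless $A$ and $B$ are already disjoint, which is excluded; hence a loop of the kind needed for Lemma~\ref{progressbound} must appear, and we are back in the first case. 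I expect the main obstacle to be the bookkeeping in this second case: one must check that the \emph{forward invariance} of both $\tilde{\Omega}_t$ and $E^B_t$ (the latter coming from forward invariance of $B|G_t$) is enough to conclude that if the forest condition ever fails it fails in a controlled way, so that Lemma~\ref{jointlyembedded} can be invoked at the right time $t$ rather than only at the endpoints. Once the case analysis is arranged so that one of the two mutually exclusive situations always triggers a uniform bound, assembling the three pieces $[0,a]$, $[a,b]$, $[b,T]$ and then unwinding back to the union over all admissible $G$ gives the theorem; equivariance under $\Out(F_n)$ is immediate since the entire construction is natural.
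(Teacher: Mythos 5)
Your proposal follows the paper's proof in all essentials: fix a rose $G_0$ with $B$ embedded, fold from $G_0$ to an arbitrary $G$ with $B|G$ embedded along a path on which $B|G_t$ is forward invariant, reduce to the middle interval $[a,b]$ supplied by Lemmas \ref{lemma3.1} and \ref{forest}, and play Lemma \ref{progressbound} off against Lemma \ref{jointlyembedded}. The one place your case analysis is looser than it should be --- and it is exactly the source of the ``bookkeeping'' worry you flag at the end --- is that you split on whether a loop in $\tilde{\Omega}_t \cup E^B_t$ exists \emph{somewhere} on $[a,b]$. In your first case, Lemma \ref{progressbound} only controls $\pi_A(\{G_t : t\ge t_0\})$, so the initial segment $[a,t_0]$ of the middle interval is left unbounded by anything you have said. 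The fix is to run the dichotomy at the single time $t=a$: Lemma \ref{jointlyembedded} is a statement about one marked graph, so if $\tilde{\Omega}_a \cup E^B_a$ were a forest, then $A$ and $B$ would be disjointly embeddable in some marked graph and hence disjoint, contradicting the standing hypothesis immediately. Therefore $\tilde{\Omega}_a \cup E^B_a$ must already contain an embedded loop at time $a$, and Lemma \ref{progressbound} applied with $t_0=a$ bounds the entire middle interval in one stroke. This is precisely how the paper phrases it, contrapositively: if $d_A(G_a,G_b)$ exceeded the Lemma \ref{progressbound} constant, no such loop could exist at time $a$, forcing the forest condition and hence disjointness. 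With that one adjustment your argument closes up, and the forward-invariance bookkeeping you anticipated needing is not required.
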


\begin{proof}
Suppose that $A$ and $B$ are free factors that are not disjoint and that $A$ is not contained in $B$, up to conjugation. Let $T$ be any free splittings of $F_n$ with $B$ as a vertex stabilizer and take $G \in \mathcal{X}_n$ to be a graph refining the splitting $T$, so that $B|G$ is embedded in $G$. Let $G_0$ be the marked rose discussed above and construct the folding path $\{G_t: t\in [0,T]\}$ from $G_0$ to $G_T=G$ with subinterval $[a,b] \subset [0,T]$ satisfying conditions $(1),(2),$ and $(3)$.

If $d_A(G_a,G_b)$ is larger than the bound determined in Lemma \ref{progressbound}, then $A|G_a$ does not contained an embedded loop with edges in $\tilde{\Omega}_{a} \cup E^B_{a}$, so $\tilde{\Omega}_{a} \cup E^B_{a}$ is a forest. By Lemma \ref{jointlyembedded}, this implies that there is a marked graph where $A$ and $B$ are disjointly embedded, contradiction our assumption. Hence,
$$d_A(G_0,T) \le d_A(G_0,G)+4 \le  d_A(G_0,G_a) + d_A(G_a,G_b) +d_A(G_b,G) +4$$
where the first and third terms are uniformly bounded by condition $(3)$ in the properties of the folding path $G_t$ and the second term is no larger than the bound determined in Lemma \ref{progressbound}. Since $T$ was an arbitrary splitting of $F_n$ with vertex stabilizer $B$, this completes the proof.
\end{proof}

Having shown that subfactor projections are well-defined, we collect some basic facts. First, for the free group $F_n$, let $D$ denote the constant determined in Theorem \ref{well} so that if $B$ meets $A$ then diam$(\pi_A(B)) \le D$. For free factors $A,B$ each of which meet the rank $\ge 2$ free factor $C$ set
$$d_C(A,B) :=d_C(\pi_C(A),\pi_C(B)) = \mathrm{diam}(\pi_C(A) \cup \pi_C(B)), $$
where $d_C$ denotes distance in $\FF(C)$. If, additionally, $A$ and $B$ are adjacent vertices of $\FF_n$ then (up to switching $A$ and $B$) $A \subset B$ and so $d_C(A,B) \le 2D$, since each projection contains the projection of a graph where both $A$ and $B$ are embedded. This shows that the projection to $\FF(C)$ is coarsely Lipschitz along paths in $\FF_n$ all of whose vertices meet $C$. We further remark the projection $\pi_C: \X_n \to \FF(C)$ is coarsely Lipschitz; this follows from facts in \cite{BFproj} and is proven explicitly in the appendix of \cite{Tayl1}. Finally, the following naturality property is a direct consequence of the definitions: if $f \in \Out(F_n)$ and $A$ and $B$ are free factors that meet the rank $\ge 2$ free factor $C$ then 
$$d_{fC}(fA,fB) = d_C(A,B),$$
where we use the natural action of $\Out(F_n)$ on conjugacy classes of free factors.

\section{Properties} \label{properties}
The following properties of subfactor projection are obtained just as in \cite{BFproj}. The point here is that our conclusions hold for more general pairs of free factors, so long as we project into the free factor complex rather than free splitting complex of a free factor. Some proofs are provided for completeness and as a verification that they apply in our more general setting. We first have the following version of Lemma $4.12$ of \cite{BFproj}.
 
 \begin{lemma} \label{nearlyembedded}
Suppose that $A$ is nearly embedded in $G \in \X_n$. Then there is a $G' \in \X_n$ where $A$ is embedded and a path in $\X_n$ from $G$ to $G'$ with the property that for any free factor $B$ which $A$ meets, the projection of this path to $\FF(B)$ has uniformly bounded diameter.
 \end{lemma}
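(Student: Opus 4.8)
The plan is to exhibit an explicit path from $G$ to some $G'$ with $A$ embedded, and then to bound its projection to $\FF(B)$ for every $B$ meeting $A$ by re-using the folding-path machinery of Section \ref{BFfolding}. Since $A$ is nearly embedded in $G$, the subgraph $\tilde{\Omega} \subset A|G$ is a forest, so the immersion $p: A|G \to G$ fails to be an embedding only along a forest. The natural first step is to construct $G'$ by modifying $G$ so as to ``pull apart'' the doubly-covered forest: concretely I would fold or unfold along the tree containing $\tilde{\Omega}$ exactly as in Lemma \ref{jointlyembedded} (which is the $E^B = \emptyset$ case of that construction), producing a marked graph $G'$ in which $A|G' = A|G$ is embedded, together with a morphism $G' \to G$ realizing the modification. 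This gives a short, explicit path $\sigma$ in $\X_n$ joining $G$ and $G'$.

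The heart of the argument is the diameter bound on $\pi_B(\sigma)$ for an arbitrary free factor $B$ that $A$ meets. The key observation is that the path $\sigma$ moves only inside the forest $\tilde{\Omega}$, i.e. it only adjusts edges of $G$ that are doubly covered by $A|G$; on the complement it is essentially the identity. I would therefore show that along $\sigma$ the subgraph realizing $B$, namely $B|G_t$, stays forward invariant, so that we are in exactly the situation of Lemma \ref{progressbound} with the roles of $A$ and $B$ interchanged: $A$ here plays the role of the factor being embedded, and $B$ plays the role of the factor into whose complex we project. The presence of an embedded loop in $B|G_t$ supported on $\tilde{\Omega}^B_t \cup E^A_t$ would then force $\pi_B(\sigma)$ to be bounded by the constant of Lemma \ref{progressbound}; and if no such loop exists, $\tilde{\Omega}^B \cup E^A$ is a forest, whence by Lemma \ref{jointlyembedded} $A$ and $B$ would be disjointly embeddable, contradicting that $A$ meets $B$. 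This dichotomy is precisely the mechanism that caps the projection.

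The main obstacle I expect is verifying that the short path $\sigma$ constructed in the first step can itself be realized (or uniformly approximated) by a folding path to which Lemmas \ref{lemma3.1}, \ref{forest}, and \ref{progressbound} literally apply, since those lemmas are stated for folding paths with the train-track normalization of Lemma \ref{lemma3.1}. I would handle this by subdividing $\sigma$ into a rescaling portion followed by a genuine folding path, invoking the standard fact from Section \ref{background} that arbitrary points of $\X_n$ are joined by such concatenations, and then restricting to the middle interval $[\beta,\gamma)$ where the hypotheses of Lemma \ref{forest} hold. The remaining care is bookkeeping: ensuring that the endpoint contributions on $[\alpha,\beta)$ and $[\gamma,\omega]$ are absorbed into the uniform constant exactly as in the proof of Theorem \ref{well}, and that the coarse Lipschitz property of $\pi_B$ established at the end of Section \ref{diameter} controls the initial rescaling segment. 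With these pieces assembled, the triangle-inequality estimate reads
$$
\mathrm{diam}(\pi_B(\sigma)) \le d_B(G,G_\beta) + d_B(G_\beta, G_\gamma) + d_B(G_\gamma, G'),
$$
where the outer terms are bounded by Lemma \ref{lemma3.1} and the middle term by the Lemma \ref{progressbound} dichotomy, yielding the desired uniform bound.
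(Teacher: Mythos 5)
Your construction of $G'$ is the paper's: take a maximal tree $T \supset \tilde{\Omega}$ in $A|G$, let $E$ be the complementary edges, set $G' = A|G \vee_{x=p(x)} (G \setminus p(E))$, and take the path induced by folding the morphism $G' \to G$. The gap is in the second half. You propose to bound $\pi_B$ of this path by running Lemma \ref{progressbound} ``with the roles of $A$ and $B$ interchanged,'' but the role-swapped hypothesis is precisely what fails here: Lemma \ref{progressbound} needs the factor playing the part of $B$ to be \emph{embedded and forward invariant} along the folding path, and in the swap that factor is $A$ --- which is only \emph{nearly} embedded in $G$, and whose copy $A|G'$ does not stay embedded along $G' \to G$, since the folds identify the edges of $\tilde{\Omega}$ with one another (that is the whole content of the morphism). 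Your text is also internally inconsistent on this point: you first assert forward invariance of ``the subgraph realizing $B$'' (but $B$ is an arbitrary factor meeting $A$ and need not be embedded in any $G_t$), and then say $A$ plays the embedded role. In neither reading are the hypotheses of Lemma \ref{progressbound}, or of the role-swapped Lemma \ref{jointlyembedded} (which likewise needs the ambient factor embedded), satisfied, so the dichotomy you invoke does not get off the ground.

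The paper's actual mechanism is different and more direct. Since $A$ meets $B$, the image of $B|G$ must cross some edge $p(e)$ with $e \in E$: otherwise $B$ would be carried by $G \setminus p(E)$ and hence be disjoint from $A$ in $G'$, a contradiction. Such an edge is covered exactly once by $A|G$, and the one-edge splitting of $B$ determined by the preimage of the midpoint of $p(e)$ under $B|G_t \to G_t$ is unaltered along the folding path (this is the statement that $B$ is ``good'' for $A$ in the sense of \cite{BFproj}). That single persistent vertex of $\S(B)$, hence of $\FF(B)$, pins $\pi_B(G_t)$ to a uniformly bounded set. Your intuition that the path ``moves only inside the forest $\tilde{\Omega}$'' is the right heuristic, but it has to be cashed out by exhibiting this fixed coordinate in $\FF(B)$, not by the loop/forest dichotomy of Section \ref{diameter}, whose hypotheses the path from $G'$ to $G$ does not meet.
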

 
 \begin{proof}
 We refer to the proof of Lemma \ref{jointlyembedded}. Since $A$ is nearly embedded in $G$, $\tilde{\Omega} \subset A|G$ is a forest. Let $T$ be a maximal tree containing $\tilde{\Omega}$ and set $E$ to be the set of edge of $A|G$ not contained in $T$. Recall that $p: A|G \to G$ maps edges of $E$ bijectively to edges of $p(E)$.  If the image of $B|G$ in $G$ crosses no edge of $p(E)$ then $B$ is carried by the subgraph $G \setminus p(E)$ of $G' = A|G \vee_{x=p(x)} (G \setminus p(E))$. This contradicts our assumption that $A$ meets $B$. Hence, the image of $B|G$ crosses the image of some edge $e$ of $E$ in $A|G$. In the language of \cite{BFproj}, $B$ is \emph{good} for $A$. The required path $ G_t$ from $G'$ to $G$ is then the path determined by folding the morphism $G' \to G$ given in Lemma \ref{jointlyembedded}. This path makes only bounded progress in $\FF(B)$, indeed in $\S(B)$, as shown in \cite{BFproj}. The point is that the splitting of $B$ determined by the preimage of the midpoint of $p(e)$ through the map $B|G_t \to G_t$ is unaltered along the path.
 \end{proof}

\begin{theorem} \label{Behrstock}
Given $F_n$, there is an $M \ge 0$ so that if $A$ and $B$ are overlapping free factors of rank $\ge 2$ then for any splitting $T$ that meets both factors
$$\min\{d_A(B,T), d_B(A,T)  \} \le M. $$
\end{theorem}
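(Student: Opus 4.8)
The plan is to prove Behrstock's inequality by exhibiting a single configuration that witnesses both factors being close to the splitting $T$ in one of the two projections, and then arguing that if $d_A(B,T)$ is large then $A$ and $B$ must be ``jointly embeddable'' in a way that forces $d_B(A,T)$ to be small. The key mechanism, as in \cite{BFproj}, is Lemma \ref{jointlyembedded}: the obstruction to disjointly embedding $A$ and $B$ is exactly the existence of an embedded loop supported on $\tilde{\Omega} \cup E^B$, and Lemma \ref{progressbound} converts such a loop into a uniform diameter bound on the projection. Concretely, I would fix a marked graph $G$ refining $T$ (so $B|G$ is embedded) and run the folding path from the rose $G_0$ to $G$ as in the proof of Theorem \ref{well}, extracting the middle subinterval $[a,b]$ satisfying conditions $(1),(2),(3)$.

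First I would set $M$ to be larger than both the Lemma \ref{progressbound} bound and the various uniformly-bounded constants appearing in conditions $(2)$ and $(3)$ of the folding path, plus the Lipschitz constant $4$ for the collapse $\S_n \to \FF_n$. Then I would argue by contraposition: suppose $d_A(B,T) > M$. Since $\pi_A(T)$ and $\pi_A(B)$ both contain $\pi_A(G)$ for the chosen $G$, a large value of $d_A(B,T)$ forces the folding path to make genuine progress in $\FF(A)$ across $[a,b]$, i.e. $d_A(G_a,G_b) > $ (the bound of Lemma \ref{progressbound}). By the contrapositive of Lemma \ref{progressbound}, $A|G_a$ then contains no embedded loop with all edges in $\tilde{\Omega}_a \cup E^B_a$, so $\tilde{\Omega}_a \cup E^B_a$ is a forest. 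By Lemma \ref{jointlyembedded} this yields a marked graph $G'$ in which $A$ and $B$ are \emph{disjointly} embedded.

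The payoff is that a disjoint embedding controls the \emph{other} projection. With $A$ and $B$ disjointly embedded in $G'$, I would apply the symmetric analysis with the roles of $A$ and $B$ swapped: the point is that in $G'$ the factor $A$ sits in the complementary subgraph $G' \setminus p(E)$ and is therefore carried by a single splitting of $B$, so it misses the edges whose preimages detect progress in $\FF(B)$. This is precisely the ``good for $B$'' / unaltered-midpoint-splitting argument of Lemma \ref{nearlyembedded}, which shows that the folding path connecting $G'$ to $G$ makes only uniformly bounded progress in $\FF(B)$, indeed in $\S(B)$. Consequently $d_B(A,T)$ is bounded by a constant depending only on $n$, and after adjusting $M$ to absorb that constant we conclude $\min\{d_A(B,T),d_B(A,T)\} \le M$.

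The main obstacle I expect is the bookkeeping of the two folding paths and making sure the $M$ chosen is genuinely uniform: the subinterval $[a,b]$ and the graph $G'$ depend on $T$, so I must verify that all the constants invoked (the Lemma \ref{progressbound} bound, the endpoint bounds of condition $(3)$, and the bounded-progress estimate from Lemma \ref{nearlyembedded}) depend only on $\mathrm{rank}(F_n)$ and not on $T$. A secondary subtlety is the asymmetry of the hypotheses: Lemma \ref{jointlyembedded} is stated with $B$ embedded and $A$ merely immersed, so I would want to check that once I have the disjoint embedding in $G'$ I may freely interchange the two factors, which is legitimate because disjointness is symmetric and $\mathrm{rank}(B) \ge 2$ guarantees the projection $\pi_B$ is itself well-defined.
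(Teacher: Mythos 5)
Your setup misreads the statement, and the misreading is fatal. In Theorem \ref{Behrstock} the splitting $T$ is an \emph{arbitrary} splitting meeting both factors; $B$ is not assumed to be a vertex stabilizer of $T$, so a refinement $G$ of $T$ does \emph{not} have $B|G$ embedded. If $B|G$ were embedded, then $\pi_A(G)\subset\pi_A(B)$ and $\pi_A(G)$ lies within distance $4$ of $\pi_A(T)$, so $d_A(B,T)\le D+4$ automatically by Theorem \ref{well} and your contraposition hypothesis $d_A(B,T)>M$ would be vacuous --- the theorem is only interesting precisely because $T$ can be far from $B$ in $\FF(A)$. Worse, the endpoint of your contrapositive is a marked graph $G'$ in which $A$ and $B$ are \emph{disjointly} embedded, i.e.\ $A$ and $B$ are disjoint free factors; this contradicts the standing hypothesis that $A$ and $B$ overlap. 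So if your argument ran, it would show $d_A(B,T)\le M$ unconditionally for overlapping factors, which is false, and the subsequent step in which you use the (impossible) disjoint embedding $G'$ to bound $d_B(A,T)$ has nothing to act on. The pair of lemmas you invoke (Lemma \ref{jointlyembedded} and Lemma \ref{progressbound}) is the engine of Theorem \ref{well}, which bounds $\mathrm{diam}(\pi_A(B))$; it is not the right tool here.

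The paper's proof uses a different mechanism, built on near-embeddedness rather than joint embeddability. Assume both $d_A(B,T)$ and $d_B(A,T)$ are large, and fold from a graph $G_0$ in which $A$ is embedded to a refinement $G_S$ of $T$. Since $\pi_B(G_0)\subset\pi_B(A)$, large $d_B(A,T)$ means the path makes large progress in $\FF(B)$, so by Lemma \ref{forest} there is a subinterval $[t_1,t_2]$ on which $B$ is nearly embedded and across which $d_B(G_{t_1},G_{t_2})$ is large. Then, because $d_A(B,G_S)$ is large and $B$ is nearly embedded at $t_2$ (so $\pi_A(G_{t_2})$ is close to $\pi_A(B)$ by Lemma \ref{nearlyembedded}), the path must still make large progress in $\FF(A)$ on $[t_2,S]$, and Lemma \ref{forest} applied to $A$ produces a later time $t_3$ at which $A$ is nearly embedded. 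But Lemma \ref{nearlyembedded} then connects $G_{t_3}$ to a graph with $A$ embedded by a path of uniformly bounded diameter in $\FF(B)$, forcing $\pi_B(G_{t_3})$ to lie within bounded distance of $\pi_B(A)\supset\pi_B(G_0)$ --- contradicting the large progress in $\FF(B)$ already accumulated before $t_2\le t_3$. If you want to salvage your write-up, replace the joint-embedding dichotomy with this two-stage near-embedding argument; the ordering of the two subintervals along a single folding path is what produces the asymmetry in the $\min$.
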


\begin{proof}
We follow the proof of Proposition $4.14$ of \cite{BFproj} and use Lemma \ref{nearlyembedded} above. Assume that both $d_A(B,T)$ and $d_B(A,T)$ are very large (relative to $D$) and let $G \in \X_n$ be a refinement of $T$ (as a splitting of $F_n$). Define a folding path $G_t$, $t\in [0,S]$ from $G_0$ to $G_S =G$, where $G_0$ is any graph with $A$ embedded. Since $d_B(A,T)$ and, hence, $d_B(G_0,G_S)$ is large, by Lemma \ref{forest} there is a subinterval $[t_1,t_2]$ where $B$ is nearly embedded and where $G_t$ makes large progress in $\FF(B)$, i.e. $d_B(G_{t_1},G_{t_2})$ is large.

Since $B$ is nearly embedded in $G_{t_2}$ and $d_A(B,G)$ is big by assumption, Lemma \ref{nearlyembedded} and Lemma \ref{forest} imply that there is an subinterval $[t_3,t_4] \subset [t_2,S]$, where $A$ is nearly embedded. Hence, $G_{t_3}$ is a graph where $A$ is nearly embedded and has very large distance in $\FF(B)$ from $G_0$, where $A$ is embedded. This contradicts Lemma \ref{nearlyembedded} and the fact that diam$(\pi_B(A)) \le D$.

\end{proof}
 
Finally, we note the following version of the Bounded Geodesic Image Theorem. The proof in \cite{BFproj} follows through without change after using the more general conditions for projection that are explained in this note. 

 \begin{theorem}[Bounded Geodesic Image Theorem] \label{BGIT}
For $n\ge 3$, there is $M \ge 0$ so that if $A$ is a free factor of $F_n$ of rank $\ge 2$ and $\gamma$ is a geodesic of $\FF_n$ with each vertex of $\gamma$ meeting $A$ (i.e having nontrivial projection to $\FF(A)$) then $\mathrm{diam}(\pi_A(\gamma)) \le M$.
 \end{theorem}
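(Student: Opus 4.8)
The plan is to bound $\mathrm{diam}(\pi_A(\gamma))$ by realizing $\gamma$ by a folding path and showing that this path can make progress in $\FF(A)$ only while it stays close to the vertex $A$ in $\FF_n$. Write $\gamma = v_0 v_1 \cdots v_k$; since each $v_i$ meets $A$, each $\pi_A(v_i)$ is defined and, by Theorem \ref{well}, has diameter $\le D$. First I would choose graphs in which $v_0$ and $v_k$ are embedded and join them by a folding path $\{G_t : t \in [0,S]\}$. By \cite{BFhyp} the projected path $t \mapsto \pi(G_t) \subset \FF_n$ is a uniform unparametrized quasi-geodesic; since $\FF_n$ is hyperbolic, stability of quasi-geodesics shows that $\pi(G_t)$ fellow-travels $\gamma$, the two coarsely sharing endpoints (note $v_0 \in \pi(G_0)$ and $v_k \in \pi(G_S)$).

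Next I would localize the progress of $G_t$ in $\FF(A)$. Applying Lemma \ref{lemma3.1} to $G_t$ and $A$, the progress outside the middle interval is uniformly bounded, while on the middle interval $A|G_t$ carries a train-track structure and $\tilde{\Omega}_t$ is forward invariant. By Lemma \ref{forest}, as soon as $\tilde{\Omega}_t$ fails to be a forest the remaining progress is bounded, so substantial progress forces $A$ to be nearly embedded. The key point is then that when $A$ is nearly embedded in $G_t$ it is realized, after collapsing the forest of identifications (cf. Lemma \ref{nearlyembedded}), as a genuine free factor visible in a graph $\FF_n$-close to $G_t$; since the map $\X_n \to \FF_n$ is coarsely Lipschitz, $A$ then lies within uniformly bounded $\FF_n$-distance of $\pi(G_t)$. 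Consequently every parameter at which $G_t$ progresses in $\FF(A)$ has $\pi(G_t)$ in a bounded ball about $A$ in $\FF_n$; by coarse convexity of the distance-to-$A$ function along the quasi-geodesic $\pi(G_t)$, these parameters form a single bounded subinterval, which fellow-travels a sub-segment $W \subset \gamma$ of uniformly bounded length.

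It then remains to bound the change of $\pi_A$ across $W$ and to see that $\pi_A$ is coarsely constant off $W$. On $W$, which contains only boundedly many vertices, the stated fact that $\pi_A$ is coarsely Lipschitz along paths of $\FF_n$ whose vertices all meet $A$ bounds the total movement of $\pi_A$ by a constant. Off $W$ the folding path makes no progress in $\FF(A)$, so it suffices to transfer this to the $v_i$: for $v_i$ off $W$, hence far from $A$ in $\FF_n$, I would use the observation that a free factor failing to meet $A$ lies within bounded $\FF_n$-distance of $A$ (a factor containing $A$ is adjacent to $A$, and a factor disjoint from $A$ is joined to $A$ through a common splitting). Thus the short $\FF_n$-path from $v_i$ to the nearby $\pi(G_{t_i})$ consists entirely of factors meeting $A$, and coarse Lipschitzness gives $d_A(v_i, G_{t_i})$ bounded, so $\pi_A(v_i)$ inherits the value of $\pi_A(G_t)$ on the adjacent plateau. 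Since $\pi_A(G_t)$ is coarsely constant on each side of the progress-interval and these two plateau values are bridged across $W$ by a bounded amount, combining the two cases bounds $\mathrm{diam}(\pi_A(\gamma))$.

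The main obstacle is the interface between proximity in the ambient complex $\FF_n$ and proximity in the subfactor complex $\FF(A)$: neither implies the other in general, and the argument rests on two transfer statements. The first is that progress in $\FF(A)$ forces near-embedding of $A$ and hence $\FF_n$-proximity to $A$; making this precise is exactly the near-embedding analysis packaged in Lemmas \ref{forest} and \ref{nearlyembedded}. The second is that being uniformly far from $A$ in $\FF_n$, together with all nearby factors meeting $A$, forces the $\FF(A)$-projection to be coarsely constant; this needs both the classification of factors failing to meet $A$ as $\FF_n$-close to $A$ and the stability of the projected folding path. Once these are in place, the remaining estimates are routine hyperbolic-geometry bookkeeping.
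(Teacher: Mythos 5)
Your overall strategy is sound and is essentially the argument the paper intends: the paper offers no proof of its own for Theorem \ref{BGIT}, deferring entirely to \cite{BFproj}, and what you have written is a reconstruction of that folding-path argument together with the extra bookkeeping that the weaker hypothesis here genuinely requires (vertices of $\gamma$ need only \emph{meet} $A$, so $\gamma$ may pass arbitrarily close to $A$ in $\FF_n$, a situation excluded by the distance hypothesis in \cite{BFproj}). Your device for that new case --- isolating a bounded subsegment $W$ of $\gamma$ near $A$ and crossing it with the estimate $d_A(v_i,v_{i+1})\le 2D$ for adjacent vertices both meeting $A$ --- is the right supplement, and the off-$W$ transfer (a factor failing to meet $A$ lies within $\FF_n$-distance about $3$ of $A$, so a geodesic from a far-away $v_i$ to the fellow-traveling folding path consists of factors meeting $A$, and the coarse Lipschitz property applies) is correct.

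The one genuine gap is your justification of the pivotal claim that near-embedding of $A$ in $G_t$ forces $A$ to lie within bounded $\FF_n$-distance of $\pi(G_t)$. You derive this from Lemma \ref{nearlyembedded} together with the coarse Lipschitz property of $\X_n\to\FF_n$, but that reasoning does not close: the graph $G'$ produced by Lemma \ref{nearlyembedded} may be arbitrarily far from $G_t$ in the Lipschitz metric on $\X_n$, and the lemma only controls the projection of the connecting folding path to $\FF(B)$ for factors $B$ that $A$ meets --- it says nothing about the displacement of that path in $\FF_n$ itself. The claim is nevertheless true and is exactly the point of the corresponding lemma in \cite{BFproj}; a direct argument runs as follows. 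Choose an edge $e$ of $A|G_t$ outside a maximal tree containing $\tilde{\Omega}_t$. Since $e\notin\tilde{\Omega}_t$, the edge $p(e)$ of $G_t$ is covered exactly once by $A|G_t$, so $A|G_t\setminus e$ maps into $G_t\setminus p(e)$ and carries a corank-one free factor $A'<A$ (nontrivial because $\mathrm{rank}(A)\ge 2$) which is contained, up to conjugacy, in a vertex group $V$ of the one-edge collapse of $G_t$ determined by $p(e)$. Hence $d_{\FF_n}(A,\pi(G_t))\le d(A,A')+d(A',V)\le 2$. With this substitution the remaining steps of your argument --- coarse convexity of the distance to $A$ along the unparametrized quasi-geodesic $\pi(G_t)$, the bounded interval of progress, and the two plateaus --- go through as you describe.
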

 
We conclude this section with a remark: Using Theorem \ref{main} one can give a coarse lower bounded on distance in $\Out(F_n)$ or $\X_n$ exactly as in \cite{Tayl1}. Since these lower bounds do not cover all distance in $\Out(F_n)$, i.e. they do not give upper bounds, we do not provide the details here. However, we do note that similar to \cite{BFproj} one needs to bound the size of a collection of rank $\ge 2$ free factors where pairwise projections are not defined. This is done in \cite{BFproj} by finding a finite coloring of the free factor complex so that between similarly colors factors one may project one of the factors to the \emph{splitting complex} of the other. As is a theme of this paper, if we consider projections to factor complexes things become simpler. In particular, if factors $A$ and $B$ are represented by embedded subgraphs in a graph $G$ and each factor represents the same subgroup of $H_1(F_n; \mathbb{Z}/2)$, then these subgraphs must be equal and so $A = B$. Hence, we can provide the following coloring of $\FF_n^0$: define $\mathcal{H}$ to be the set of proper subgroups of $H_1(F_n; \mathbb{Z}/2)$ and let $c: F_n^0 \to \mathcal{H}$ be defined by
$$c(A)  = H_1(A; \mathbb{Z}/2) \le H_1(F_n; \mathbb{Z}/2). $$
Then, as explained above, if $A$ and $B$ are distinct free factors with rank $\ge 2$ and $c(A) =c(B)$, then $A$ and $B$ overlap.

\section{Constructing Fully Irreducible Automorphisms}\label{constructing}

We consider the following modification of the free factor complex. Let $\C_n$ for $n \ge 2$ be the graph defined as follows: the vertices of $\C_n$ are conjugacy classes of rank $1$ free factors of $F_n$ and two vertices $v,w \in \C^0_n$ are jointed by an edge if they can be represented by elements $x$ and $y$ in $F_n$, respectively, such that $\langle x,y \rangle$ is a rank $2$ free factor of $F_n$; that is edges are determined by disjointness of vertices. This graph is obviously quasi-isometric to the free factor complex of $F_n$. For a free factor $A < F_n$, let $X_A$ denote the set of vertices of $\C_n$ that fail to project to $\FF(A)$, i.e. that are disjoint from $A$. The complex $\C_n$ has the following advantage over $\FF_n$: for any free factor $A$ the diameter of $X_A$ in $\C_n$ is $\le 2$. In fact, $X_A$ is contained in a $1$-neighborhood of any rank $1$ free factor of $A$. We remark that for $\gamma_1,\gamma_2$ adjacent vertices of $\C_n$ that are not contained in $X_A$, $d_A(\gamma_1, \gamma_2) \le 2D$. We also have the corresponding version of the Bounded Geodesic Image Theorem for $\C_n$. We state it here for later reference.

\begin{proposition}\label{BGITc}
For $n \ge 3$, there is an $M \ge 0$ so that if $A$ is a free factor of $F_n$ of rank $\ge2$ and $\gamma$ is a geodesic in $\C_n$ with each vertex of $\gamma$ meeting $A$, i.e. $\gamma$ is disjoint from $X_A$, then $\mathrm{diam}(\pi_A(\gamma)) \le M$.
\end{proposition}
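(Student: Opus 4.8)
The statement to prove is Proposition~\ref{BGITc}, which is the version of the Bounded Geodesic Image Theorem for the modified complex $\C_n$. We already have the same statement for $\FF_n$ (Theorem~\ref{BGIT}), so the natural plan is to transfer the result across the quasi-isometry $\C_n \to \FF_n$ and use the special geometry of the disjointness sets $X_A$ that was just established.

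The plan is as follows. First I would fix a geodesic $\gamma$ in $\C_n$ all of whose vertices meet $A$, i.e. $\gamma \cap X_A = \emptyset$. The vertices of $\gamma$ are rank $1$ free factors; since consecutive vertices $\gamma_i,\gamma_{i+1}$ are joined by an edge of $\C_n$, they are disjoint and so together span a rank $2$ free factor, which is a vertex of $\FF_n$ adjacent to each of them. In this way $\gamma$ determines a path $\gamma'$ in $\FF_n$ whose vertices all still meet $A$ (this uses that $\gamma$ avoids $X_A$, together with the remark that for adjacent vertices $\gamma_1,\gamma_2 \notin X_A$ one has $d_A(\gamma_1,\gamma_2)\le 2D$), and whose coarse image in $\FF_n$ is within bounded Hausdorff distance of $\gamma$ under the quasi-isometry. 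The key point to record is that $\pi_A(\gamma)$ and $\pi_A(\gamma')$ have bounded Hausdorff distance in $\FF(A)$, using that projection is coarsely Lipschitz along paths whose vertices meet $A$.

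Next I would replace $\gamma'$, which is only a quasi-geodesic in $\FF_n$, by an actual geodesic $[x,y]$ joining its endpoints. Because $\FF_n$ is hyperbolic, $\gamma'$ and $[x,y]$ fellow-travel, so $[x,y]$ stays in a uniformly bounded neighborhood of $\gamma'$. The obstacle here — and the step I expect to be the main one — is that fellow-traveling in $\FF_n$ does not immediately guarantee that every vertex of the geodesic $[x,y]$ still meets $A$, which is the hypothesis needed to invoke Theorem~\ref{BGIT}. I would handle this by using the diameter bound on $X_A$ in $\C_n$ (namely $\le 2$, with $X_A$ contained in a $1$-neighborhood of any rank $1$ factor of $A$): the set of factors disjoint from $A$ is so small that a geodesic fellow-traveling a path disjoint from $X_A$ can only dip into the ``$A$-missing'' region near a bounded-diameter set, so projections from the offending vertices contribute only boundedly to the diameter, or can be truncated away from the endpoints.

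Finally I would assemble the estimate: $\mathrm{diam}(\pi_A(\gamma))$ is bounded by $\mathrm{diam}(\pi_A(\gamma'))$ plus a constant, which in turn is bounded by $\mathrm{diam}(\pi_A([x,y]))$ plus a constant coming from fellow-traveling and coarse Lipschitzness, and $\mathrm{diam}(\pi_A([x,y])) \le M_0$ by Theorem~\ref{BGIT}. Absorbing all the additive constants (each depending only on $n$) into a single $M$ gives the result. The cleanest packaging is probably to note that the paper's remark preceding the proposition already reduces everything to Theorem~\ref{BGIT} via the quasi-isometry and the smallness of $X_A$, so the write-up can be short: the content is entirely in controlling how a $\C_n$-geodesic avoiding $X_A$ corresponds to an $\FF_n$-geodesic whose vertices meet $A$, and the boundedness of $X_A$ is exactly what makes that correspondence lossless up to a constant.
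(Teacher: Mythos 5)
The paper gives no proof of Proposition \ref{BGITc} at all: it is stated ``for later reference,'' with the implicit justification that the folding-path proof of Theorem \ref{BGIT} from \cite{BFproj} runs verbatim with $\C_n$ in place of $\FF_n$ (a folding path still projects to an unparametrized quasi-geodesic in $\C_n$, and the rest of that argument lives in $\FF(A)$, not in the ambient complex). So you are supplying an argument where the paper supplies none, and your route --- transferring Theorem \ref{BGIT} across the quasi-isometry $\C_n \to \FF_n$ --- is genuinely different from the intended one. Unfortunately it has a real gap, located exactly at the step you flag as the main one, and the proposed fix does not close it.

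The problem is that neither the hypothesis nor the conclusion of the BGIT is stable under bounded perturbation in $\FF_n$. Fellow-traveling gives you, for each vertex $v$ of $\gamma'$, a vertex $w$ of the geodesic $[x,y]$ with $d_{\FF_n}(v,w)\le R$, but this does \emph{not} bound $d_A(v,w)$: the map $\pi_A$ is coarsely Lipschitz only along paths all of whose vertices meet $A$, and the short geodesic from $v$ to $w$ may pass through a factor missing $A$, across which $\pi_A$ can jump arbitrarily far. The same issue defeats truncation: if $[x,y]$ enters the set of factors missing $A$, applying Theorem \ref{BGIT} to the two complementary sub-geodesics bounds each of their projections separately, but nothing bounds the jump of $\pi_A$ across the excised portion --- that jump being unbounded is precisely the content of the theorem. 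The smallness of $X_A$ is a red herring for this purpose: the set of \emph{all} $\FF_n$-vertices missing $A$ (factors containing $A$ or disjoint from it) likewise has uniformly bounded diameter in $\FF_n$, lying within distance about $4$ of the vertex $A$, and yet $\pi_A$ is wildly discontinuous on any neighborhood of it. What your argument actually yields is that if $d_A(x,y)$ is large then $\gamma$ passes within bounded distance of $A$; but a vertex of $\C_n$ can be adjacent to $X_A$ (for instance a rank-one factor of $A$ itself) while still meeting $A$, so this is strictly weaker than the required conclusion that $\gamma$ contains a vertex of $X_A$. To close the gap one has to re-enter the folding-path machinery of \cite{BFproj}, where the vertex missing $A$ is produced explicitly from a graph along the path in which $A$ is nearly embedded, rather than extracted from the coarse geometry of $\FF_n$.
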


To make effective use of the graph $\C_n$, we need the following lemma.

\begin{lemma}\label{cyclicsuffices}
Let $A$ and $B$ be free factors of $F_n$ with $\mathrm{rank}(A) \ge 2$ and $\pi_A(B) \neq \emptyset$. Then there is a cyclic (i.e. rank 1) factor $\gamma \subset B$ with $\pi_A(\gamma) \neq \emptyset$.
\end{lemma}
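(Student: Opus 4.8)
The plan is to produce the cyclic factor directly as a vertex group of an explicit free splitting on which $A$ is non-elliptic, using the tree/blow-up description of $\pi_A$ rather than the folding machinery. If $\mathrm{rank}(B) = 1$ then $B$ is already cyclic and $\gamma = B$ works, so assume $\mathrm{rank}(B) \ge 2$. First I would record what the hypothesis buys us. Since $\pi_A(B) \neq \emptyset$, the factor $B$ meets $A$, so $A$ is not conjugate into $B$ and $A$ and $B$ are not disjoint. Fix a complement $F_n = B * C$ and let $T$ be the Bass--Serre tree of this splitting; it is bipartite, with $B$-type vertices (stabilizers the conjugates of $B$), $C$-type vertices, and trivial edge stabilizers. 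Because $A$ is conjugate into neither $B$ nor $C$, it is non-elliptic on $T$, so its minimal subtree $T^A$ is nondegenerate. Every edge of $T^A$ has a $B$-type endpoint, so $T^A$ contains $B$-type vertices; by minimality of the $A$-action at least one such vertex $v_0$ is not a valence-one vertex with trivial $A$-stabilizer (otherwise every $B$-vertex could be collapsed, forcing $A$ into $C$). After conjugating $A$ we may assume $\mathrm{Stab}_T(v_0) = B$, and $v_0$ has either valence $\ge 2$ in $T^A$ or nontrivial stabilizer $A \cap B$.

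Now I would produce $\gamma$. The idea is to choose a primitive $w \in B$, giving a one-edge free splitting $B = \langle w \rangle * B'$ with Bass--Serre tree $Y$, and to equivariantly blow up every $B$-type vertex of $T$ along $Y$, obtaining a refinement $\hat T$ of $T$ with trivial edge stabilizers. One then collapses all edges of $\hat T$ except the orbit of the blow-up edge to get a free splitting $S$ dual to $F_n = \langle w \rangle * R_w$, so that $\langle w\rangle$ is a vertex stabilizer of $S$. The point is that $A$ is non-elliptic on $S$ precisely when $T^A$ meets a blow-up edge of $\hat T$, and I would choose $w$ to guarantee this. Concretely: if $v_0$ has two incident $T^A$-edges $e_{b_1}, e_{b_2}$, I choose $w$ and $B'$ so that $b_1^{-1} b_2 \notin B'$; then the two edges attach to distinct vertices of $Y$, and the geodesic of $\hat T$ joining them (which lies in the blown-up copy of $T^A$) must cross a blow-up edge. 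If instead $v_0$ has nontrivial stabilizer, I pick $1 \ne u \in A \cap B$ and choose $B = \langle w \rangle * B'$ so that $u$ is hyperbolic in $Y$ (always possible for a nontrivial element of a free group of rank $\ge 2$); then the axis of $u$, which lies in $T^A \subset \hat T$, crosses blow-up edges. In both cases $T^A$ crosses the blow-up orbit, so $A$ is non-elliptic on $S$ and $\pi_A(S) \neq \emptyset$.

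Since $w$ is primitive in $B$, hence primitive in $F_n$, the factor $\gamma = \langle w \rangle \subset B$ is a rank-one free factor, and because $S$ is a splitting with vertex stabilizer $\langle w\rangle$ on which $A$ is non-elliptic, we get $\pi_A(\gamma) \supseteq \pi_A(S) \neq \emptyset$ directly from the definition of $\pi_A(\langle w\rangle)$ as a union over all splittings with vertex group $\langle w\rangle$. This is exactly the desired conclusion; note that I never have to verify non-disjointness of $\gamma$ and $A$ abstractly, since exhibiting a single witnessing splitting $S$ already makes the projection nonempty.

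I expect the main obstacle to be this separation step, i.e.\ verifying that the local data at $v_0$ --- a pair of incident edges or a nontrivial stabilizer element --- can always be detected by a splitting of $B$ along a single primitive $w$, together with the bookkeeping that this forces $T^A$, and not merely some larger $A$-invariant subtree, to traverse a new edge after blowing up. The two auxiliary facts one needs are that any nontrivial element of a rank $\ge 2$ free group is hyperbolic in some one-edge free splitting, and that collapsing all but one edge-orbit of $\hat T$ converts ``$T^A$ crosses that orbit'' into ``$A$ is non-elliptic on the quotient''; both are routine but should be stated carefully. A secondary point, needed for the existence of the good vertex $v_0$, is the minimality/reducedness argument ruling out the degenerate case where $A$ is conjugate into $C$.
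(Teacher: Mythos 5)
There is a genuine gap, and it is exactly the step you wave away at the end: the claim that ``exhibiting a single witnessing splitting $S$ already makes the projection nonempty.'' Under the definitions in this paper, $\pi_A(\gamma)$ is \emph{declared} to be empty whenever $A$ and $\gamma$ are disjoint, and $B'$ misses $A$ precisely when $A$ is elliptic in \emph{some} splitting with $B'$ as a vertex stabilizer --- equivalently, $\pi_A(B')\neq\emptyset$ requires $A$ to be non-elliptic in \emph{every} such splitting. Producing one splitting $S$ with vertex group $\langle w\rangle$ on which $A$ is non-elliptic does not rule out disjointness, because distinct complements of $\langle w\rangle$ need not be conjugate. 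Concretely, take $F_4=\langle a,b,c,d\rangle$, $A=\langle a,b\rangle$, $B=\langle b,c,d\rangle$, $C=\langle a\rangle$; your recipe permits $w=bc$ with $B'=\langle c,d\rangle$ (since $b_1^{-1}b_2=b\notin B'$), and indeed $A$ is non-elliptic on the resulting splitting $\langle bc\rangle *(\langle c,d\rangle * \langle a\rangle)$. But $\langle a,b,c\rangle=\langle a,b\rangle * \langle bc\rangle$ is a free factor of $F_4$, so $\gamma=\langle bc\rangle$ and $A$ \emph{are} disjoint and $\pi_A(\gamma)=\emptyset$. So your construction can output a $\gamma$ for which the conclusion fails; the property you actually need (non-disjointness) is a statement about all splittings containing $\gamma$ as a vertex group, and nothing in the local choice of $w$ at $v_0$ controls it. (The earlier parts of your argument --- non-ellipticity of $A$ on the Bass--Serre tree of $B*C$, the blow-up, and the detection of a blow-up edge by the minimal subtree --- are fine, but they only deliver the too-weak conclusion.)

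For comparison, the paper avoids this issue entirely by arguing at the level of the projection to $\FF(B)$ rather than constructing $\gamma$ by hand: assuming $\mathrm{rank}(B)\ge 2$ and $\pi_B(A)\neq\emptyset$, it chooses a cyclic factor $\gamma$ of $B$ at distance $>D+4$ from $\pi_B(A)$ in $\FF(B)$ (possible since $\FF(B)$ has infinite diameter). If $\gamma$ missed $A$, there would be a single marked graph $G$ carrying embedded representatives of both $A$ and $\gamma$, so $\pi_B(G)$ would lie in both $\pi_B(A)$ and $\pi_B(\gamma)$, forcing $d_B(A,\gamma)\le D+4$ by the diameter bound of Theorem \ref{well} --- a contradiction. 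If you want to salvage your constructive approach, you would need to choose $w$ so that $\langle w\rangle * A^g$ is not a free factor of rank $\mathrm{rank}(A)+1$ for any $g$, and the cleanest way to certify that is essentially the paper's distance argument.
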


\begin{proof}
If $B$ is rank $1$ there is nothing to show, and if $B \subset A$ then any rank $1$ subfactor will do. Hence, we may assume that rank$(B) \ge 2$ and that $\pi_B(A) \neq \emptyset$. Choose a cyclic factor $\gamma$ of $B$ that is at distance $> D+4$ from $\pi_B(A)$ in $\FF(B)$.  If $\pi_A(\gamma) = \emptyset$ then there is a marked graph $G$ containing subgraphs representing $A$ and $\gamma$, respectively. Then by definition
$$\pi_B(G) \subset \pi_B(A) \quad \text{ and } \quad \pi_B(G) \subset \pi_B(\gamma), $$
implying that $d_B(A,\gamma) \le \mathrm{diam}(\pi_B(A)) + \mathrm{diam}(\pi_B(\gamma)) \le D+4$, a contradition.

\end{proof}

The following proposition shows how subfactor projections can be used to build up distance in the graph $\C_n$. In the mapping class group situation, this is proven for the curve complex in \cite{Ma2}. The idea originates in \cite{KLconv}.

\begin{proposition} \label{progress}
Let $\{A_i\}$ be a collection of free factors and let $X_i$ be the set of vertices of $\C_n$ that do not project to $A_i$, i.e. $X_i  = X_{A_i}$. Let $M$ be the constant determined in Proposition \ref{BGITc}. Assume that

 \begin{enumerate}
\item $X_i$ and $X_{i+1}$ are disjoint in $\C_n$ and
\item $d_{Y_i}(x_{i-1}, x_{i+1}) > 2M$ for any $x_{i-1} \in X_{i-1}$ and $x_{i+1} \in X_{i+1}$. 
\end{enumerate}
Then the $X_i$ are pairwise disjoint and for any $x_j \in X_j$ and $x_{j+k} \in X_{j+k}$, any geodesic $[x_j,x_{j+k}]$ contains a vertex from $X_i$ for $j \le i \le j+k $.
\end{proposition}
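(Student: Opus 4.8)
$\begin{proof}[Proof plan]$

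We are proving Proposition~\ref{progress}, which is a purely coarse-geometric statement in the $\delta$-hyperbolic graph $\C_n$; the only input specific to subfactor projections is the Bounded Geodesic Image Theorem (Proposition~\ref{BGITc}) and the fact (noted before Proposition~\ref{BGITc}) that each $X_{A_i}$ has diameter $\le 2$ in $\C_n$. The plan is to run the standard Masur--Minsky ``projection builds distance'' argument, for which the combinatorial heart is a contrapositive use of the BGIT: if a geodesic misses the region $X_i$ where vertices fail to project to $A_i$, then every vertex of the geodesic projects to $\FF(A_i)$, so Proposition~\ref{BGITc} caps $\mathrm{diam}(\pi_{A_i}(\gamma)) \le M$. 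First I would establish pairwise disjointness of the $X_i$. This is where hypothesis $(2)$ does the work: if $X_i$ and $X_j$ met for some $|i-j| \ge 2$, we could build a short path from $X_{i-1}$ to $X_{i+1}$ passing through (or near) the overlap, and projecting this configuration to $\FF(Y_i)$ would violate the $> 2M$ separation assumed in $(2)$, using that $X_{i\pm 1}$ have bounded projection diameter together with the coarse Lipschitz property of projection along adjacent vertices.

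The main claim---that any geodesic $[x_j, x_{j+k}]$ with $x_j \in X_j$ and $x_{j+k} \in X_{j+k}$ must pass through each intermediate $X_i$---is the crux, and I would prove it by induction on $k$, or equivalently by a direct ``no-skip'' argument. Fix an intermediate index $i$ with $j < i < j+k$ and suppose toward contradiction that the geodesic $[x_j, x_{j+k}]$ avoids $X_i$ entirely. Then every vertex of this geodesic meets $A_i$, so by Proposition~\ref{BGITc} the total projection satisfies $\mathrm{diam}(\pi_{A_i}([x_j, x_{j+k}])) \le M$. The endpoints project to within $M$ of $\pi_{A_i}(x_j)$ and $\pi_{A_i}(x_{j+k})$, so in particular $d_{A_i}(\pi_{A_i}(x_j), \pi_{A_i}(x_{j+k})) \le M$. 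On the other hand, I want to show $x_j$ and $x_{j+k}$ project \emph{far apart} in $\FF(A_i)$, contradicting this bound. The idea is that $x_j$ lies ``on the far side'' of $X_{i-1}$ and $x_{j+k}$ lies ``on the far side'' of $X_{i+1}$, and hypothesis $(2)$ guarantees $d_{A_i}(X_{i-1}, X_{i+1}) > 2M$; I would use the bounded diameter of $X_{i\pm 1}$ to transfer this separation to the projections of the endpoints.

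The delicate point---and the step I expect to be the main obstacle---is relating $\pi_{A_i}(x_j)$ to the projection of $X_{i-1}$ (and symmetrically $\pi_{A_i}(x_{j+k})$ to $X_{i+1}$). A clean induction is the natural fix: assuming the proposition for shorter geodesics, a geodesic from $x_j \in X_j$ to $x_{j+k}$ passes through some vertex of $X_{i-1}$ (since $j \le i-1 \le j+k$), and between that vertex and $x_j$ the geodesic can be taken to avoid $X_i$ (as $X_i$ and $X_{i-1}$ are disjoint and, by the pairwise-disjointness just proved, all the $X$'s are separated). This lets me apply Proposition~\ref{BGITc} to the \emph{sub}-geodesic from $x_j$ to that $X_{i-1}$-vertex and conclude $d_{A_i}(x_j, X_{i-1}) \le M$; combined with $\mathrm{diam}(X_{i-1}) \le 2$ and the coarse Lipschitz bound $d_{A_i} \le 2D$ on adjacent vertices, $\pi_{A_i}(x_j)$ sits within a controlled distance of $\pi_{A_i}(X_{i-1})$. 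Running the mirror argument for $x_{j+k}$ and $X_{i+1}$, and adding the three estimates, forces
$$
d_{A_i}(X_{i-1}, X_{i+1}) \le d_{A_i}(x_j, X_{i-1}) + d_{A_i}(x_j, x_{j+k}) + d_{A_i}(x_{j+k}, X_{i+1}) \le M + M + M,
$$
which (after absorbing the additive diameter and Lipschitz constants into $M$ by enlarging it, or by bookkeeping them explicitly) contradicts hypothesis $(2)$. Here $Y_i = A_i$ in the notation of the hypotheses. The bookkeeping of constants---whether the bound comes out as $2M$ or $3M$ plus additive error, and hence what constant must actually be assumed in $(2)$---is the routine but fiddly part I would need to get right; the conceptual structure is the BGIT-plus-induction template.

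$\end{proof}$
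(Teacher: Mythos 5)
You correctly identify the overall template (induction plus the Bounded Geodesic Image Theorem) and you even flag the delicate point, but the proposal does not actually resolve it, and this is precisely where the paper's proof does its real work. Your plan needs a geodesic from $x_j$ to a vertex of $X_{i-1}$ that avoids $X_i$, so that Proposition \ref{BGITc} bounds $d_{A_i}(x_j, X_{i-1})$. Your justification --- that such a geodesic exists ``as $X_i$ and $X_{i-1}$ are disjoint'' --- does not work: disjointness of the target sets says nothing about whether a geodesic between points outside $X_i$ wanders through $X_i$. (And the alternative reading, in which you apply the induction hypothesis to the \emph{full} geodesic $[x_j,x_{j+k}]$ to find a vertex of $X_{i-1}$ on it, is circular, since that geodesic has index distance $k$.) The paper closes this gap with an explicit surgery: it starts with a geodesic $[x_j, x_{j+k-2}]$, takes the \emph{first} vertex $x_{j+k-1}$ of $X_{j+k-1}$ on it, applies the induction hypothesis to the initial segment $[x_j,x_{j+k-1}]$ to find a vertex $x'_{j+k-2}\in X_{j+k-2}$ on that segment, and then reroutes the tail through a length-two path $\{x'_{j+k-2}, a_{j+k-2}, x_{j+k-2}\}$, where $a_{j+k-2}$ is a cyclic factor of $A_{j+k-2}$ meeting $A_{j+k-1}$ supplied by Lemma \ref{cyclicsuffices} and the bound $\mathrm{diam}(X_{A})\le 2$ in $\C_n$. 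This rerouted path is still a geodesic and avoids $X_{j+k-1}$ by construction. Without some such argument your key estimate $d_{A_i}(x_j, X_{i-1})\le M$ is unsupported.

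There is a second, quantitative problem: your symmetric three-term estimate yields $d_{A_i}(X_{i-1},X_{i+1}) \le M+M+M = 3M$ (plus additive errors), which does \emph{not} contradict the hypothesis $d_{A_i}(X_{i-1},X_{i+1}) > 2M$; you cannot absorb a full extra $M$ by ``bookkeeping.'' The paper avoids this by always working at the penultimate index $i=j+k-1$, where one endpoint $x_{j+k}$ already lies in $X_{i+1}$, so only one transfer estimate is needed: $d_{A_{j+k-1}}(x_j,x_{j+k}) \ge d_{A_{j+k-1}}(x_{j+k-2},x_{j+k}) - d_{A_{j+k-1}}(x_j,x_{j+k-2}) > 2M - M = M$, and a second application of Proposition \ref{BGITc} then forces $[x_j,x_{j+k}]$ to meet $X_{j+k-1}$; the remaining indices are handled by applying the induction hypothesis to the initial segment $[x_j,x_{j+k-1}]$. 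You should restructure your induction this way rather than attacking an arbitrary intermediate index $i$ symmetrically.
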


\begin{proof}
The proof is adapted from \cite{Ma2}. There are two reasons for providing the details here. First, the argument is an illustration of how the general subfactor projections discussed in this note and the complex $\C_n$ are many ways analogous to subsurface projections and the curve complex. Second, there are several subtleties that make subfactor projections different; for example, there is no canonical ``boundary curve'' of $A$ contained in $X_A$.

The proposition is proven by induction on $k$; for $k=1$ there is nothing to prove. Let $x_j 
\in X_j$ and $x_{j+k} \in X_{j+k}$ be given and consider the geodesic $[x_j,x_{j+k}]$. Select any $x_{j+k-2} \in X_{j+k-2}$. We first show that there exists a geodesic $[x_j,x_{j+k-2}]$ that avoids vertices of $X_{j+k-1}$. To see this, start with a geodesic $[x_j, x_{j+k-2}]$ that contains a vertex $x_{j+k-1}$ of $X_{j+k-1}$ and decompose it as
$$[x_j, x_{j+k-2}]  = [x_j, x_{j+k-1}] \cup [x_{j+k-1}, x_{j+k-2}].$$
Suppose that we have chosen $x_{j+k-1}$ to be the first vertex of $X_{j+k-1}$ that appears along $[x_j, x_{j+k-2}]$ so that $[x_j, x_{j+k-1}]$ is disjoint from $X_{j+k-1}$ except at its last vertex. The induction hypotheses now implies that $[x_j, x_{j+k-1}]$ meets $X_{k+j-2}$ at a vertex $x_{k+j-2}'$ and we can write
$$[x_j, x_{j+k-2}]  = [x_j, x'_{j+k-2}] \cup [x'_{j+k-2}, x_{j+k-1}]  \cup [x_{j+k-1}, x_{j+k-2}]. $$

By assumption, these last two geodesics have length at least $1$ and since the diameter of each $X_i$ is less that $2$ we may replace the union of the last two geodesics with a geodesic $\{x_{j+k-2}', a_{j+k-2}, x_{j+k-2}\}$, where $a_{j+k-2}$ is a cyclic factor of $A_{j+k-2}$ whose projection to $A_{j+k-1}$ is nonempty. This is possible by Lemma \ref{cyclicsuffices}. Hence, we have produced a geodesic from $x_j$ to $x_{j+k-2}$ that avoids $X_{j+k-1}$.

Since $[x_j,x_{j+k-2}]$ avoids $X_{j+k-1}$, Proposition \ref{BGITc} implies that $d_{A_{j+k-1}}(x_j,x_{j+k-2}) \le  M$. Hence, 
\begin{eqnarray*}
d_{A_{j+k-1}}(x_j,x_{j+k}) &\ge& d_{A_{j+k-1}}(x_{j+k-2},x_{j+k}) -d_{A_{j+k-1}}(x_j,x_{j+k-2}) \\
&>& 2M -M \ge M.
\end{eqnarray*}
Another application of Proposition \ref{BGITc} gives that any geodesic $[x_j,x_{j+k}]$ must contain a vertex that misses $A_{j+k-1}$, hence there is a vertex $x_{j+k-1} \in X_{j+k-1}$ with $x_{j+k-1} \in [x_j,x_{j+k}]$. This implies that we may write $[x_j,x_{j+k}] = [x_j,x_{j+k-1}] \cup [x_{j+k-1}, x_{j+k}]$ and applying the induction hypothesis to $[x_j,x_{j+k-1}]$ we conclude that the geodesic $[x_j,x_{j+k}]$ contains a vertex from each $X_i$ for $j \le i \le j+k$. Also, if $X_j \cap X_{j+k}$ contained a vertex $x$ then the geodesic $[x,x]$ would have to intersect $X_{j+1}$, contradicting our hypothesis. This conclude the proof.
\end{proof}

The next theorem is similar to Proposition $3.3$ in \cite{Ma2}, where pseudo-Anosov mapping classes are constructed using the curve complex. Say that a collection of free factors $\{A_1,\ldots, A_n\}$ of $F_n$ \emph{fill} if for any free factor $C < F_n$, $\pi_{A_i}(C) \neq \emptyset$ for some $i$.  In other words, every free factor meets some factor in the collection.

\begin{theorem} \label{constructinggeneral}
Let $A$ and $B$ be rank $\ge 2$ free factors of $F_n$ that fill and let $f,g \in \Out(F_n)$ satisfy the following:
\begin{enumerate}
\item $f(A) = A$ and $f|_A \in \Out(A)$ has translation length $> 2M+4D$, and
\item $g(B) =B$ and $g|_B \in \Out(B)$ has translation length $> 2M+4D$.
\end{enumerate}
Then any nontrivial automorphism in the subgroup $\langle f,g \rangle \le \Out(F_n)$ that is not conjugate to a power of $f$ or $g$ is fully irreducible. Moreover, any finitely generated subgroup of $\langle f,g \rangle$ consisting entirely of such automorphisms has the property that any orbit map into $\FF_n$ is a quasi-isometric embedding.
\end{theorem}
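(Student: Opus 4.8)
The plan is to run a ping-pong argument on the graph $\C_n$ via Proposition \ref{progress}, produce linear progress, and conclude loxodromic (hence fully irreducible) behaviour. Since full irreducibility is a conjugacy invariant and $\C_n$ is quasi-isometric to $\FF_n$, I would first conjugate a given $w \in \langle f, g\rangle$ (not conjugate to a power of $f$ or $g$) into cyclically reduced form $w = s_1 s_2 \cdots s_L$, where the syllables $s_j$ are nonzero powers of $f$ and $g$ alternating in type and $L \ge 2$. Writing $q_j = s_1 \cdots s_j$ for the prefixes of the bi-infinite periodic word $w^\infty$, I would define a bi-infinite sequence of rank $\ge 2$ free factors $\{A_i\}_{i \in \mathbb{Z}}$ by setting $A_i = q_i(A)$ when $s_{i+1}$ is a power of $f$ and $A_i = q_i(B)$ when $s_{i+1}$ is a power of $g$, and put $X_i = X_{A_i}$. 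Because $f$ fixes $A$ and $g$ fixes $B$, the prefixes telescope so that consecutive factors are simple translates of $A$ and $B$ by a common prefix; in particular $w$ shifts the sequence by one period, $w(A_i) = A_{i+L}$, and hence $w(X_i) = X_{i+L}$.

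The heart of the argument is verifying the two hypotheses of Proposition \ref{progress}. For hypothesis (1), telescoping identifies consecutive factors as $q(A)$ and $q(B)$ for a common prefix $q$; since $A$ and $B$ fill, $X_A \cap X_B = \emptyset$, and equivariance of the sets $X_{(\cdot)}$ under $\Out(F_n)$ gives $X_i \cap X_{i+1} = q(X_A \cap X_B) = \emptyset$. For hypothesis (2), the same telescoping writes three consecutive factors as $A_i = q_i(A)$, $A_{i-1} = q_i(B)$ and $A_{i+1} = q_i f^{a}(B)$ for the relevant nonzero exponent $a$; by naturality of subfactor projections and $\pi_A(f^a B) = f^a \pi_A(B)$,
$$d_{A_i}(A_{i-1}, A_{i+1}) = d_A(B, f^a(B)) = d_A(\pi_A(B), f^a \pi_A(B)) \ge \ell(f|_A) - 2D,$$
where $\ell(f|_A)$ is the translation length of $f|_A$ on $\FF(A)$ and $\mathrm{diam}(\pi_A(B)) \le D$. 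The hypothesis $\ell(f|_A) > 2M+4D$ forces this well above $2M$, and the symmetric computation with $g$ and $B$ handles the other parity. The remaining subtlety is that hypothesis (2) concerns projections of the \emph{sets} $X_{i\pm1}$, not of the factors $A_{i\pm1}$; but each $X_{A_{i\pm1}}$ has diameter $\le 2$ in $\C_n$ and lies in a bounded neighbourhood of a rank $1$ subfactor of $A_{i\pm1}$, so coarse Lipschitzness of $\pi_{A_i}$ along $\C_n$-paths meeting $A_i$ places $\pi_{A_i}(X_{i\pm1})$ within $O(D)$ of $\pi_{A_i}(A_{i\pm1})$, slack the $4D$ in the hypothesis is designed to absorb. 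I expect this bookkeeping — reducing the set-projections to factor-projections and pinning the constants above $2M$ — to be the main obstacle.

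With both hypotheses in hand, Proposition \ref{progress} shows the $X_i$ are pairwise disjoint and that any $\C_n$-geodesic from $X_0$ to $X_{Lm}$ meets every intermediate $X_i$; being pairwise disjoint, such a geodesic has length $\ge Lm$. Choosing $x_0 \in X_0$ and using $w^m(X_0) = X_{Lm}$ gives $d_{\C_n}(x_0, w^m x_0) \ge Lm$, so $w$ acts loxodromically on $\C_n$ and hence on $\FF_n$. By the characterization of loxodromic elements of $\FF_n$ as exactly the fully irreducible automorphisms, $w$ is fully irreducible, proving the first assertion.

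For the quasi-isometric embedding statement, I would first note that the displacement estimate above, applied to arbitrary alternating words, shows every nontrivial reduced word in $\langle f\rangle * \langle g\rangle$ acts nontrivially, so $\langle f, g\rangle \cong \langle f\rangle * \langle g\rangle$; moreover each syllable moves $x_0$ a bounded amount in $\C_n$, since $f$ preserves the bounded set $X_A$ and $g$ preserves $X_B$. Thus $d_{\C_n}(x_0, h x_0) \asymp N(h)$, where $N(h)$ is syllable length, equivalently $N(h) \asymp d_T(v_0, h v_0)$ for the Bass--Serre tree $T$ of the free product. A finitely generated subgroup $H$ all of whose nontrivial elements are fully irreducible contains no conjugate of a power of $f$ or $g$ (these are reducible), hence acts on $T$ with no nontrivial elliptics; it therefore acts freely, and cocompactly on its minimal subtree, so the orbit map $H \to T$ is a quasi-isometric embedding. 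Combining
$$\|h\|_H \asymp d_T(v_0, hv_0) \asymp N(h) \asymp d_{\C_n}(x_0, hx_0) \asymp d_{\FF_n}(x_0, hx_0)$$
shows that the orbit map $H \to \FF_n$ is a quasi-isometric embedding.
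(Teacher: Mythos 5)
Your proposal is correct and follows essentially the same route as the paper: reduce to a cyclically reduced (even-syllable) conjugate, form the sequence of sets $X_i$ by translating $X_A$ and $X_B$ by prefixes, verify the hypotheses of Proposition \ref{progress} using the filling condition, the bound $\mathrm{diam}(\pi_A(X_B))\le 2D$, and the translation-length hypothesis, and conclude positive translation length on $\C_n$, hence full irreducibility via \cite{BFhyp}. Your treatment of the set-versus-factor projection bookkeeping and the Bass--Serre tree argument for the quasi-isometric embedding simply fills in details the paper delegates to \cite{Ma2}.
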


Before beginning the proof we make the following remark: By \cite{BFhyp}, an outer automorphism has positive translation length in $\FF_n$ (or $\C_n$) if and only if it is fully irreducible. Hence, if $f$ and $g$ fix the free factors $A$ and $B$, respectively, and their restrictions are fully irreducible, then conditions $(1)$ and $(2)$ are satisfied after passing to a sufficiently high power. If there were to exists a uniform lower bound on the translation length of a fully irreducible automorphism in $\FF_n$, depending only on $n$, then such a power would be independent of $f$ and $g$. 

\begin{proof}

We sketch the proof as the details are similar to \cite{Ma2}. First, note that we have chosen translation lengths sufficient large so that any geodesic of $\C_n$ joining vertices of  $X_B$ and $fX_B$ must contain a vertex of $X_A$, and similarly any geodesic joining vertices of  $X_A$ and $gX_A$ must contain a vertex of $X_B$. To see this, note that since $A$ and $B$ fill, $X_A \cap X_B = \emptyset$. Also, if $b$ is a rank $1$ free factors of $B$ that meets $A$, which exists by Lemma \ref{cyclicsuffices},
$$\mathrm{diam}(\pi_A(X_B)) \le 2 \cdot \max\{d_A(b,\beta): \beta \in X_B \} \le 2D. $$
Hence, for any $\beta \in X_B$ and $\beta' \in fX_B$, let $a_{\beta} \in \pi_A(\beta)$ so that 
\begin{eqnarray*}
d_A(\beta,\beta') &\ge& d_A(\beta,f\beta) - d_A(f\beta,\beta') \\
                &\ge& d_A(a_{\beta},fa_{\beta}) - 2\cdot \mathrm{diam}(\pi_A(\beta)) - d_A(f\beta,\beta') \\
                &>& (2M+4D) -2D-2D \ge 2M > M.
\end{eqnarray*}
Then by the Proposition \ref{BGITc}, any geodesic from $\beta$ to $\beta'$ must contain a vertex that misses $A$, i.e. that is contained in $X_A$. The proof now proceeds by using Theorem \ref{progress} to show that elements not conjugate to powers of $f$ or $g$ act with positive translation length on $\C_n$ and are therefore fully irreducible.

For any $w \in \langle f,g \rangle$ in reduced form, write $w =s_1\ldots s_n$ where each $s_i$ is a \emph{syllable} of $w$, i.e. a maximal power of either $f$ or $g$. Suppose for simplicity that $s_1$ is a power of $f$ and $s_n$ is a power of $g$ (so in particular $n$ is even) and set $X_i = s_1 \ldots s_{i-1}X_A$ for $i$ odd and $X_i = s_1 \ldots s_{i-1}X_B$ for $i$ even. By naturally of the $\Out(F_n)$-action, these are precisely the sets of vertices of $\C_n$ that fail to project to the free factors $A_i = s_1 \ldots s_{i-1}A$ and $B_i = s_1 \ldots s_{i-1}B$, respectively. Using that fact that $X_A$ is fixed by $f$ and $X_B$ is fixed by $g$, it is quickly verified that the sets $X_i$ satisfy the conditions of Theorem \ref{progress} for $1 \le i \le n+1$. We conclude that for $\alpha \in X_A$ and $w \alpha \in wX_A = X_{n+1}$, the geodesic $[\alpha, w\alpha]$ contains at least $n+1$ vertices and so
$$d_{\C} (\alpha, w \alpha) \ge n =  |w|_s, $$
where $|\cdot|_s$ denotes the number of syllables. In general, one shows that either $d_{\C} (\alpha, w \alpha) \ge |w|_s$ or $d_{\C} (\beta, w \beta) \ge |w|_s$, where $\beta \in X_B$, depending on the first and last syllable of $w$.

To finish the proof, observe that any $w \in  \langle f,g \rangle$ that is not a conjugate to a power of $f$ or $g$ has a conjugate $w'$ with an even number of syllables and $w'$ has the property that $|w'^n|_s = n |w'|_s$. Hence, $w'$ has positive translation length in $\C_n$, as does its conjugate $w$. This shows that $w$ is fully irreducible.

 The statement about quasi-isometric orbit maps follows as in \cite{Ma2}.
\end{proof}

We conclude with the remark that Theorem \ref{constructinggeneral} can be generalized to free groups of higher rank as well as the right-angled Artin subgroups of $\Out(F_n)$ constructed in \cite{Tayl1}. This will be the subject of future work.

\bibliographystyle{amsalpha}
\bibliography{subfactorprojections.bbl}
\end{document}